\definecolor{amethyst}{rgb}{0.6, 0.4, 0.8} 
 \newcommand{\negcrossing}
 {\raisebox{-0.02in}
    {\includegraphics[scale=0.28, angle = 90]{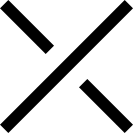}}}
 \newcommand{\poscrossing}
 {\raisebox{-0.02in}
    {\includegraphics[scale=0.28]{figures/crossing.png}}}
\newcommand{\infinity}
{\raisebox{-0.03in}
	{\includegraphics[scale=0.35]{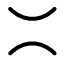}}}
\theoremstyle{definition}
\newtheorem{thm}{Theorem}
\newtheorem{defn}[thm]{Definition}
\newtheorem{prop}[thm]{Proposition}
\newtheorem{cor}[thm]{Corollary}
\newtheorem{conj}[thm]{Conjecture}
\newtheorem*{conj-nonum}{Conjecture}
\newtheorem*{quest-nonum}{Question}
\newtheorem{lemma}[thm]{Lemma}
\newtheorem{rem}[thm]{Remark}
\newtheorem{ex}[thm]{Example}
\newtheorem{question}[thm]{Question}
\newcommand{\Kh}{\mathrm{Kh}}
\newcommand{\qmax}{\mathrm{q_{max}}}
\newcommand{\qmin}{\mathrm{q_{min}}}
\title{On the symmetric braid index of ribbon knots}
\author{Vitalijs Brejevs}
\author{Fer\.ide Ceren K\"ose}
\begin{document}
\begin{abstract}
    We define the symmetric braid index $b_s(K)$ of a ribbon knot $K$ to be the smallest index of a braid whose closure yields a symmetric union diagram of $K$, and derive a Khovanov-homological characterisation of knots with $b_s(K)$ at most three. As applications, we show that there exist knots whose symmetric braid index is strictly greater than the braid index, and deduce that every chiral slice knot with determinant one has braid index at least four. We also calculate bounds for $b_s(K)$ for prime ribbon knots with at most 11 crossings.
\end{abstract}

\maketitle

\section{Introduction and main results}
\label{sec:intro}

Introduced by Kinoshita and Terasaka in~\cite{kt:original}, the notion of a \emph{symmetric union (SU)} knot provides an appealing diagrammatic way of producing ribbon knots. Informally speaking, an SU knot $K$ is constructed from a knot $J$, called a \emph{partial knot} of $K$, by inserting twisted tangles, called \emph{twist regions}, on the symmetry axis of the connected sum $J \# -J$, where $-J$ is the mirror of $J$; see Figure~\ref{fig:su-intro} for an example and Figure ~\ref{fig:su} for the general picture. Such $K$ bounds an immersed disc all of whose singularities are of ribbon type and correspond to crossings in the chosen diagram of $J$. This construction is very versatile, as evidenced by the fact that the vast majority of ribbon knots with at most 12 crossings~\cite{lamm:nonsym} and all two-bridge slice knots~\cite{lamm:2bridge} have been shown to be SU knots, which motivates the following key open conjecture in the study of SU knots.

\begin{conj-nonum}[`Ribbon--SU conjecture']
    Every ribbon knot is an SU knot.
\end{conj-nonum}

There are multiple choices involved in the construction of an SU knot, i.e., that of a partial knot $J$, a diagram of $J$, and the way of inserting twist regions. Therefore, a natural approach to addressing the conjecture is to first choose a normal form for SU knots. One such form is provided by a result of Lamm in~\cite{lamm:original} stating that every SU knot admits a \emph{braided SU diagram}, i.e., an SU diagram that is simultaneously a diagram of the standard closure $\mathrm{cl}({\beta})$ of some braid $\beta$ that we call an \emph{SU braid}; see right of Figure~\ref{fig:su-intro} for an example and Section~\ref{sec:su} for the precise definition. Hence, we may define the \emph{symmetric braid index} $b_s(K)$ of a ribbon knot $K$ by
        \[
            b_s(K) = \min \{ b(\beta) \mid \beta \textrm{ is an SU braid s.t. } K \sim \mathrm{cl}({\beta}) \},
        \]
    where $b(\beta)$ is the index of $\beta$; if $K$ does not admit an SU diagram, we set $b_s(K) = \infty$.

\begin{figure}[h]
        \centering
        \includegraphics[width=0.80\textwidth]{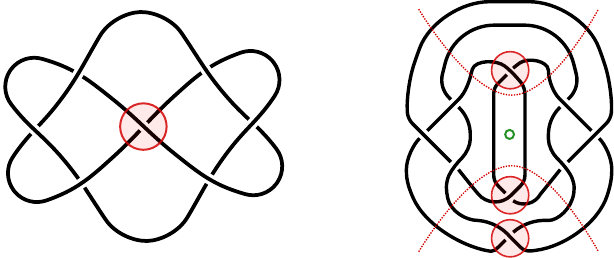}
        \begin{overpic}[width=\linewidth,height=0.001in]{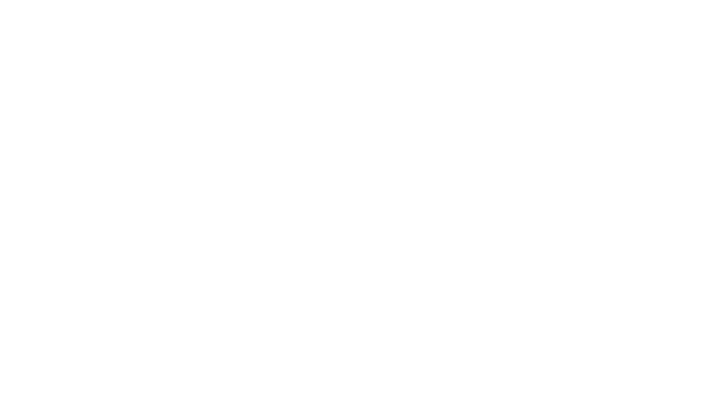}
        \put(51,20) {$\mathbf{\longrightarrow}$}
        \put(60,20) {\rotatebox{90}{$\gamma$}}
        \put(90,21) {\rotatebox{-90}{$\gamma^{-1}$}}
        \put(75,39) {$C_1$}
        \put(75,2) {\rotatebox{180}{$C_2$}}
        \end{overpic}
        \captionsetup{width=0.9\linewidth}
        \caption{Left: an SU diagram of the stevedore knot $6_1$ obtained by inserting a single twist region, shown in red, on the axis of symmetry of $3_1 \# -3_1$. Right: a braided SU diagram of $6_1$ with three twist regions and the braid axis perpendicular to the page, indicated by the green circle; the corresponding SU braid is given by $\gamma C_1 \gamma^{-1} C_2 \in B_4$ for $\gamma = \sigma_2 \sigma_1^{-1} \sigma_2$, $C_1 = \sigma_3$ and $C_2 = \sigma_1 \sigma_3^{-1}$.}        \label{fig:su-intro}
\end{figure}

Clearly, the braid index $b(K)$ of $K$ is a lower bound for $b_s(K)$. The only knot with $b_s(K) = 1$ is the unknot and no knot has $b_s(K) = 2$, so $b_s(K) = 3$ is the smallest interesting case to consider. Such knots can be represented as closures of 3-braids of the form
    \[
        \beta = \gamma \sigma_2^{\varepsilon_1} \gamma^{-1} \sigma_2^{\varepsilon_2},
    \]
where $\sigma_1$ and $\sigma_2$ are the standard generators of $B_3$, $\gamma \in B_3$ and $\varepsilon_i \in \{ \pm 1\}$ for $i = 1, 2$. Our main result is a characterisation of knots with $b_s(K) \leq 3$ in terms of their Khovanov homology.

In the following, let $K_{p,q}$ be the rational knot corresponding to $p/q \in \mathbb{Q} \cup \{1/0\}$, where $p$ is odd and positive. For $p > 1$, the knot $K_{p,q}$ is a two-bridge knot; otherwise, $K_{1,q}$ is the unknot for all $q$. Since every two-bridge knot $K_{p,q}$ also corresponds to some $p'/q' \in \mathbb{Q}$ with $p = p'$ and $0 < q' < p'$, we may assume $0 < q < p$ when working with such knots; however, elsewhere we allow rational $p/q$-tangles with $p/q \leq 1$.

\begin{restatable}{thm}{main}
{\label{thm:main}}
      Suppose that $K$ is a ribbon knot with $b_s(K) \leq 3$.
      Then there exists $K_{p,q}$ with $p^2 = \det(K)$ such that $K$ admits it as a partial knot and one of the following holds:
      \begin{enumerate}
          \item $\Kh(K) \cong \Kh(K_{p,q} \# -K_{p,q})$;
          \item $\Kh(K) \cong \Kh(K^\prime)$ for a Montesinos knot $K^\prime \in \{ K[\frac{q}{p}, \pm \frac{1}{2}, -\frac{q}{p}] \}$.
      \end{enumerate}
\end{restatable}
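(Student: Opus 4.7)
The plan is to work directly with the explicit 3-braid form $\beta = \gamma \sigma_2^{\varepsilon_1} \gamma^{-1} \sigma_2^{\varepsilon_2} \in B_3$ and analyse its closure through the cube of resolutions at the two axial crossings $\sigma_2^{\varepsilon_1}, \sigma_2^{\varepsilon_2}$. I would first identify the partial knot $J$: cutting the braided SU diagram along its symmetry axis — equivalently, applying the horizontal (cup-cap) smoothing at both axial crossings — produces a connected sum $J \# -J$, where $J$ is built from $\gamma$ capped off on two of the three strands. Since any 3-braid together with such caps yields a rational tangle, $J$ must be a two-bridge knot $K_{p,q}$ for some odd $p>0$ and coprime $q$. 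The identity $p^2 = \det(K)$ then follows from $\det(K) = \det(J)^2$, a standard determinant identity for SU knots.

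Next I would analyse the three remaining resolutions in the cube. The all-horizontal resolution gives $K_{p,q} \# -K_{p,q}$; the all-vertical (braid-like) resolution collapses to the closure of a diagram in which $\gamma$ and $\gamma^{-1}$ cancel, giving an unknot or trivial unlink; the two mixed resolutions should be recognisable as the Montesinos knots $K[\tfrac{q}{p}, \pm\tfrac{1}{2}, -\tfrac{q}{p}]$, the $\pm$ sign depending on $\varepsilon_1, \varepsilon_2$. The key observation is that keeping one axial crossing intact while smoothing the other leaves a diagram which decomposes naturally into three rational tangles arranged around a Conway sphere: a tangle of slope $q/p$ coming from $\gamma$, a middle $\pm\tfrac{1}{2}$ coming from the surviving axial crossing, and a tangle of slope $-q/p$ coming from $\gamma^{-1}$.

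I would then feed this data into Khovanov's unoriented skein exact triangle, applied successively at the two axial crossings, to constrain $\Kh(K)$ from the Khovanov homologies of the four resolutions. The Khovanov homology of $K_{p,q} \# -K_{p,q}$ is computed via Künneth (and is thin, since two-bridge knots are); the Montesinos candidates $K[\tfrac{q}{p}, \pm\tfrac{1}{2}, -\tfrac{q}{p}]$ are quasi-alternating by standard Montesinos criteria and hence also thin. Combining these structural constraints with the iterated exact triangle, one expects a dichotomy: either all relevant connecting maps vanish and $\Kh(K) \cong \Kh(K_{p,q} \# -K_{p,q})$, or cancellations occur in precisely the pattern forcing $\Kh(K)$ to agree with one of the two Montesinos candidates. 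This produces the two alternatives of the statement.

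The main obstacle will be verifying the precise tangle identification in the second step — that the mixed resolutions give exactly the Montesinos knots with slopes $q/p$, $\pm\tfrac{1}{2}$, $-q/p$ rather than some twisted or shifted variant — which requires carefully tracking how the element $\gamma \in B_3$ corresponds to a rational tangle of slope $q/p$ and propagating that identification consistently through both the partial-knot description and the Montesinos decomposition. A secondary difficulty will be ensuring the Khovanov-homological dichotomy is exhaustive: one must rule out any exotic cancellation pattern in the skein exact triangles that could produce a third possibility for $\Kh(K)$, which likely requires a delta-grading/thinness argument that survives the two nested long exact sequences.
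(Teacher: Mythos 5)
Your identification of the partial knot is fine and matches the paper (Proposition~\ref{prop:2bridge} via the plat closure of $\gamma$ and Lamm's determinant identity), but the core of your argument has two genuine problems. First, your tangle identifications are off: a mixed resolution keeps exactly \emph{one} axial crossing, so it yields the symmetric fusion number one knot $(D_{K_{p,q}} \sqcup -D_{K_{p,q}})(\infty_1, \pm 1)$, i.e.\ the Montesinos knot $K[\frac{q}{p}, \pm\frac{1}{1}, -\frac{q}{p}]$ with a single half-twist --- not the knots $K[\frac{q}{p}, \pm\frac{1}{2}, -\frac{q}{p}]$ appearing in the statement. Nothing in the cube of resolutions of two single crossings ever produces a $\pm\frac{1}{2}$-tangle, so your four corner diagrams cannot be the objects the theorem is about. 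Second, and more fundamentally, the iterated unoriented skein triangle gives long exact sequences with potentially nonvanishing connecting maps; there is no mechanism forcing $\Kh(K)$ to coincide \emph{exactly} with the homology of one of the resolutions, and the hoped-for ``dichotomy of cancellation patterns'' is not something thinness can deliver here (the knots $K[\frac{q}{p},\pm\frac{1}{2},-\frac{q}{p}]$ are generally not thin: by Lemma~\ref{lemma:khformula} the factor $(1+tq^4)$ spreads their homology across $\delta$-diagonals two apart). Moreover, the actual dichotomy in the theorem is not decided by homological cancellation at all --- it is decided by the braid word: $\varepsilon_1 = -\varepsilon_2$ gives case (1) and $\varepsilon_1 = \varepsilon_2$ gives case (2).

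The missing idea is a substitute for this analysis: the paper invokes Watson's theorem (Theorem~\ref{thm:watson}), which says that for a compatible pair of simple tangles one may add opposite half-twists to $T$ and $U$ without changing Khovanov homology. Applied to the braided SU diagram with $T$ and $U$ the two $(\pm 1)$-tangles on the axis, this slides one axial crossing onto the other, producing --- with $\Kh$ unchanged --- a single twist region with $0$ or $\pm 2$ crossings. The resulting knot has symmetric fusion number one, so Theorem~\ref{thm:kose} identifies it as $K_{p,q}\#-K_{p,q}$ (the $n=0$ case) or $K[\frac{q}{p},\pm\frac{1}{2},-\frac{q}{p}]$ (the $n=\pm 2$ case). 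Without Watson's theorem or an equivalent half-twist invariance statement, the skein-triangle approach would essentially have to reprove that result from scratch, and as set up it cannot reach the stated conclusion.
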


In~\cite{lisca:3braids}, Lisca classified finite concordance 3-braid knots into three families, denoted $(1)$, $(2)$ and $(3)$, and studied their properties such as sliceness, quasipositivity and amphichirality. Building upon results of Simone in~\cite{simone:classification}, in Section~\ref{sec:bs=3} we determine that knots with $b_s(K) \leq 3$ comprise precisely the families $(1)$ and $(2)$. This allows us to give a concise reformulation of Theorem~1.1 in~\cite{lisca:3braids}, reducing it to two cases; we defer the definitions of the notions involved in the statement to Section~\ref{sec:bs=3}.

\begin{restatable}{thm}{combinedlbs}
\label{thm:combined-lbs}
    A knot $K$ is a 3-braid knot of finite concordance order if and only if $K$ is the closure of a braid in one the following forms:
    \begin{enumerate}
        \item[$(\mathrm{A})$] $\beta = \gamma \sigma_2^{\varepsilon_1} \gamma^{-1} \sigma_2^{\varepsilon_2}$, 
        where $\gamma \in B_3$ and $\varepsilon_i \in \{\pm 1\}$ for $i = 1, 2$;
        \item[$(\mathrm{B})$] $\beta = \sigma_1^{x_1} \sigma_2^{-y_1} \dots \sigma_1^{x_n} \sigma_2^{-y_n}$,
        where the associated string of $\beta$ lies in the set
        \[
            \{ b_1 + 1, b_2, \dots, b_{k-1}, b_k + 1, c_1, \dots, c_l \mid k + l \geq 2 \}
        \]
        for linear dual strings $(b_1, \dots, b_k)$ and $(c_1, \dots, c_l)$.
    \end{enumerate}
    Moreover, in case $(\mathrm{A})$, the knot $K$ is always ribbon and it is quasipositive when $\varepsilon_1 = \varepsilon_2 = 1$, and in case $(\mathrm{B})$, it is amphichiral.
\end{restatable}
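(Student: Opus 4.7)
The plan is to deduce Theorem~\ref{thm:combined-lbs} from Lisca's tripartite classification in~\cite{lisca:3braids} combined with Simone's refinement~\cite{simone:classification}, by matching each of Lisca's families with one of the two normal forms~(A) and~(B). Concretely, Lisca partitions finite-order 3-braid knots into families $(1)$, $(2)$ and $(3)$; the identification to be carried out (and, according to the excerpt, established in Section~\ref{sec:bs=3}) is that families $(1) \cup (2)$ consist precisely of the knots with $b_s(K) \leq 3$, while family $(3)$ corresponds to the knots whose associated string has the symmetric dual form described in~(B). Given this correspondence, Theorem~\ref{thm:combined-lbs} then becomes essentially a repackaging of Lisca's result, and the bulk of the proof is to check that each Lisca family is really described by the claimed braid word, and to verify the geometric properties asserted at the end.

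For case~(A), I would argue as follows. Every knot with $b_s(K) \leq 3$ is, by definition, the closure of an SU braid of the form $\gamma \sigma_2^{\varepsilon_1} \gamma^{-1} \sigma_2^{\varepsilon_2}$ with $\gamma \in B_3$, so the closure-of-braid characterisation is automatic once the identification of families $(1)\cup(2)$ with the SU~3-braid knots is in hand. Ribbonness is inherited from Lamm's result~\cite{lamm:original}, which ensures that any closure of an SU braid is a symmetric union and hence ribbon. Quasipositivity in the subcase $\varepsilon_1 = \varepsilon_2 = 1$ follows because $\beta = \gamma \sigma_2 \gamma^{-1} \cdot \sigma_2$ is manifestly a product of two positive band generators in $B_3$, and a product of positive conjugates of positive powers of standard generators is a quasipositive braid by definition.

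For case~(B), I would start from the braid word $\beta = \sigma_1^{x_1} \sigma_2^{-y_1} \cdots \sigma_1^{x_n} \sigma_2^{-y_n}$ and read off its associated string in the sense of Lisca (the sequence of exponents, after a small shift). Family~$(3)$ consists of those knots whose associated string comes from a pair of linear dual strings in the pattern $\{b_1+1, b_2, \ldots, b_{k-1}, b_k+1, c_1, \ldots, c_l\}$ with $k+l \geq 2$; this is exactly the condition stated in~(B). Amphichirality then follows from the symmetry built into linear dual strings: the plumbing description of the associated two-bridge or Montesinos link produced by such a string is invariant under orientation reversal of the ambient $S^3$, so the closure $\mathrm{cl}(\beta)$ is isotopic to its mirror. (More concretely, the dual-string relation is exactly Riemenschneider's point-rule, which conjugates a word into its mirror image at the braid level.)

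The main obstacle is the translation layer between Lisca's notation and the two normal forms stated here. Checking that families~$(1)\cup(2)$ really do coincide with knots admitting an SU 3-braid presentation requires the work already referenced in Section~\ref{sec:bs=3} and is conceptually the content of that section; the subtle point is to make sure no finite-order 3-braid knot sits in both~(A) and~(B) in a way that would undermine the amphichirality claim, or, conversely, that every member of family~$(3)$ genuinely admits the prescribed alternating word rather than only a word equivalent to it up to conjugation and stabilisation. I expect this to be handled by normalising every 3-braid word via the standard reduction to an alternating presentation $\sigma_1^{x_1}\sigma_2^{-y_1}\cdots$ with all $x_i, y_i > 0$, and then invoking the dual-string/Riemenschneider matching on the resulting sequence of exponents.
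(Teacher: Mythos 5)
Your proposal has a genuine gap at its core: you treat the identification of Lisca's families $(1)\cup(2)$ with the knots admitting an SU $3$-braid presentation as something ``already in hand'' or ``established in Section~\ref{sec:bs=3}'', but that identification \emph{is} the content of Theorem~\ref{thm:combined-lbs} --- assuming it is circular. The nontrivial direction is showing that every knot in Lisca's family $(1)$ is the closure of a braid of the form $\gamma\sigma_2^{\varepsilon_1}\gamma^{-1}\sigma_2^{\varepsilon_2}$. Note that family $(2)$ is \emph{defined} as the closures of $\gamma\sigma_2^{\pm1}\gamma^{-1}\sigma_2^{\mp1}$ (opposite signs only), so form $(\mathrm{A})$ strictly exceeds family $(2)$: the equal-sign case $\varepsilon_1=\varepsilon_2$ must be matched with family $(1)$, whose members are presented by Lisca in the completely different normal form $(\sigma_1\sigma_2)^3\sigma_1^{x_1}\sigma_2^{-y_1}\cdots\sigma_1^{x_n}\sigma_2^{-y_n}$ subject to a combinatorial condition on the exponents. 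The paper bridges this by first invoking the classification of $\chi$-slice $3$-braid closures to identify the associated strings of family $(1)$ with the family $\mathcal{S}_{1a}$ built from linear dual strings, then realising those as closures of $\beta'=\Delta^{-2}\sigma_1\overline{\zeta}\sigma_1^2\zeta^{-1}\sigma_1$, and finally performing an explicit word manipulation using $\Delta^{-1}\sigma_1=\sigma_2\Delta^{-1}$ to conjugate $\beta'$ into $\gamma\sigma_2^{-1}\gamma^{-1}\sigma_2^{-1}$ with $\gamma=\sigma_1^{-1}\zeta\sigma_1^{-1}$. None of this appears in your outline; ``normalising every $3$-braid word via the standard reduction to an alternating presentation'' will not produce it, since the reduction to exponent form goes in the opposite direction from what is needed.

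Two smaller points. Your direct argument for quasipositivity when $\varepsilon_1=\varepsilon_2=1$ (writing $\beta=\gamma\sigma_2\gamma^{-1}\cdot\sigma_2$ as a product of conjugates of positive generators) is correct and is cleaner than the paper's citation of Lisca; similarly, ribbonness of closures of SU braids does follow from Lamm as you say. But for case $(\mathrm{B})$ your amphichirality argument via ``invariance of the plumbing under orientation reversal'' is only a gesture; the paper instead matches family $(3)$ with the string family $\mathcal{S}_{2c}$ from Simone's classification and then quotes amphichirality directly from Lisca's Theorem~1.1, which is the safe route. Your worry about knots lying in both $(\mathrm{A})$ and $(\mathrm{B})$ is a non-issue: families $(2)$ and $(3)$ genuinely intersect, and such knots are simply both ribbon and amphichiral.
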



Lisca also showed in~\cite{lisca:3braids} that family $(1)$ is disjoint from the other two, but both complements $ (2) \setminus (3)$ and $(3) \setminus (2)$ are non-empty; however, the only known knots in the latter set are non-slice. Using Theorem~\ref{thm:main}, we can obstruct certain ribbon 3-braid knots from having $b_s(K) \leq 3$, which implies that they belong to $(3)$, but not to $(1) \cup (2)$. This provides first known examples of ribbon knots with distinct symmetric and regular braid indices.


\begin{restatable}{thm}{bs}
    {\label{thm:bs>b}}
    For  $K \in \{ 10_{99}, 10_{123} \}$, we have $3 = b(K) < b_s(K) \in \{4,5\}$.
\end{restatable}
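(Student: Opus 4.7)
The plan is to break the statement into three claims: $b(K) = 3$, $b_s(K) \leq 5$, and $b_s(K) \geq 4$ for $K \in \{10_{99}, 10_{123}\}$. The first equality is classical and can be read off the braid index column of standard knot tables (or derived from the HOMFLY--PT lower bound), so the real content is the two inequalities on $b_s$.

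For the lower bound $b_s(K) \geq 4$, I would apply Theorem~\ref{thm:main} contrapositively. Were $b_s(K) \leq 3$, there would exist a two-bridge knot $K_{p,q}$ with $p^{2} = \det(K)$ such that $\Kh(K) \cong \Kh(K_{p,q} \# -K_{p,q})$ or $\Kh(K) \cong \Kh(K[\tfrac{q}{p}, \pm \tfrac{1}{2}, -\tfrac{q}{p}])$. The determinants are $\det(10_{99}) = 25$ and $\det(10_{123}) = 121$, forcing $p = 5$ and $p = 11$ respectively, after which $q$ ranges over a small finite set modulo the two-bridge ambiguity $q \sim q^{\pm 1} \pmod{p}$. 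For each surviving pair $(p,q)$ one computes the bigraded integral Khovanov group of the connected sum $K_{p,q} \# -K_{p,q}$ (via the Khovanov Künneth formula together with the mirror symmetry $\Kh(-K_{p,q}) = \overline{\Kh(K_{p,q})}$) and the Khovanov groups of the two associated Montesinos knots, and compares them with $\Kh(10_{99})$ and $\Kh(10_{123})$ as tabulated in KnotAtlas. In practice, lighter invariants -- total rank, signature, or the Jones polynomial recovered as the graded Euler characteristic -- eliminate most candidates immediately, leaving only a handful that require a full bigraded comparison.

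For the upper bound $b_s(K) \leq 5$, I would exhibit explicit SU braids $\beta \in B_5$ whose closures realise $10_{99}$ and $10_{123}$. Since these knots are already known to admit symmetric union presentations (see Lamm's census in~\cite{lamm:nonsym}), applying the braiding procedure of~\cite{lamm:original} to the tabulated SU diagrams -- possibly after symmetric simplifications compatible with the axis of symmetry -- yields braided SU diagrams; counting strands and writing down the resulting braid word is then routine.

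The main obstacle is the Khovanov bookkeeping in the lower-bound step: for $10_{123}$ with $p = 11$ the list of candidate two-bridge partial knots is noticeably longer than for $10_{99}$, and each pair $(p,q)$ contributes two Montesinos candidates from case~$(2)$ of Theorem~\ref{thm:main}, so care is required to enumerate all cases without oversight. That said, the verification is finite, mechanical, and well suited to computer algebra, and once it is complete the conclusion $b_s(K) \in \{4,5\}$ follows by combining the two bounds.
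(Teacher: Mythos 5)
Your overall strategy is the same as the paper's: apply Theorem~\ref{thm:main} (via Proposition~\ref{prop:2bridge}) contrapositively to rule out $b_s(K)\leq 3$ by comparing Khovanov data of $K$ against the finitely many candidates determined by $\det(K)$, and exhibit index-five SU braids for the upper bound. However, there are two concrete problems. First, your determinant for $10_{99}$ is wrong: $\det(10_{99}) = 81$, not $25$ (its Alexander polynomial is $(t^2-t+1)^4$ up to units), so $p = 9$ and the candidate partial knots are $\mathcal{J}_9 = \{K_{9,1}, K_{9,4}\} = \{9_1, 6_1\}$, not two-bridge knots with $p=5$. Running your enumeration with $p=5$ would compare $\Kh(10_{99})$ against the wrong list and the lower-bound argument would collapse. (Your value $\det(10_{123})=121$ is correct.) For the actual comparison the paper does not need a full bigraded computation: Lemma~\ref{lemma:khformula} gives $\qmax$ and $\qmin$ of each candidate directly from the Jones polynomial of $K_{p,q}\#-K_{p,q}$, and these already disagree with $\qmax(10_{99})=11$ and the corresponding value for $10_{123}$; this is in the spirit of the ``lighter invariants'' you mention, but it is the entirety of the argument, not just a first pass.

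Second, the upper bound $b_s(K)\leq 5$ is not actually established by your proposal. Applying Lamm's braiding procedure to a tabulated SU diagram gives \emph{some} SU braid, but with no control on its index; there is no reason the output has five or fewer strands, and ``counting strands'' of whatever comes out could just as well give $6$ or more. The paper instead exhibits explicit index-five SU braids for both knots (found by computer search and recorded in Table~\ref{tab:bs}: e.g.\ $\gamma = \sigma_1^3\sigma_3\sigma_4^{-1}\sigma_3$, $C_1 = \sigma_2\sigma_4$, $C_2 = \sigma_2^{-1}\sigma_4^{-1}$ for $10_{99}$), whose closures are verified to be the knots in question. Without producing such braids, or some other argument bounding the index of the braided diagram, you only get $b_s(K)\geq 4$, not $b_s(K)\in\{4,5\}$.
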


The proofs of Theorems~\ref{thm:main} and~\ref{thm:bs>b} rely on the work~\cite{kose:knotinv} of the second author who determined that symmetric fusion number one knots (see Definition~\ref{def:sufusion}) are precisely Montesinos knots $ K[\frac{q}{p}, \pm \frac{1}{n}, -\frac{q}{p}]$ and gave an explicit formula for their Khovanov homology, presented in Lemma~\ref{lemma:khformula}. The formula also leads to a concise Khovanov-theoretic obstruction to a knot being a finite concordance order 3-braid knot, proved in Section~\ref{sec:kh}. In the following, for a link $L$ we write
    \begin{align*}
        \qmax(L) &= \max \{ j \mid \Kh^{i,j}(L) \neq 0\} \ \textrm{ and }  \\
        \qmin(L) &= \min \{ j \mid \Kh^{i,j}(L) \neq 0\}.
    \end{align*}
\begin{cor}
\label{cor:qmax}
Suppose $K$ is a finite concordance order 3-braid knot.
  \begin{enumerate}
           \item If $K$ is in the family $(1)$, then $\qmax(K)+\qmin(K) = \pm 8$;
           \item otherwise, $\qmax(K)+\qmin(K) = 0$.
    \end{enumerate}
\end{cor}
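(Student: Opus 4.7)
The plan is to reduce knots in Lisca's families $(1) \cup (2)$ to the two cases of Theorem~\ref{thm:main}, then compute the extremal $q$-gradings via Lemma~\ref{lemma:khformula} in the Montesinos case and via the Khovanov mirror identity in the connect-sum case, handling the knots in $(3) \setminus (1 \cup 2)$ separately via amphichirality.

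First, by the discussion in Section~\ref{sec:bs=3}, a finite concordance order 3-braid knot lies in $(1) \cup (2)$ precisely when $b_s(K) \leq 3$, so Theorem~\ref{thm:main} applies and provides two possible Khovanov models for such $K$. I would then match family $(1)$ with case~$(2)$ of Theorem~\ref{thm:main} (the Montesinos model $K[\tfrac{q}{p}, \pm\tfrac{1}{2}, -\tfrac{q}{p}]$) and family $(2)$ with its case~$(1)$ (the connect-sum model $K_{p,q}\#-K_{p,q}$), reading the matching off the explicit braid-word descriptions in Theorem~\ref{thm:combined-lbs}; if preferred, this correspondence can be forced a posteriori, since the amphichiral knot $K_{p,q}\#-K_{p,q}$ has $\qmax+\qmin = 0$, incompatible with the value $\pm 8$ that case~$(2)$ will turn out to produce. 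The remaining knots, in $(3) \setminus (1 \cup 2)$, have $b_s(K) > 3$ and by Lisca's classification are amphichiral.

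For family $(1)$ I would substitute $n = 2$ into Lemma~\ref{lemma:khformula} and extract the top and bottom nonzero $q$-gradings of $\Kh(K[\tfrac{q}{p}, \pm\tfrac{1}{2}, -\tfrac{q}{p}])$. The expected phenomenon is that the $q/p$-dependent shifts coming from the outer tangles $\tfrac{q}{p}$ and $-\tfrac{q}{p}$ cancel in the sum $\qmax+\qmin$, leaving the universal constant $\pm 8$ whose sign tracks that of the middle $\pm\tfrac{1}{2}$ tangle. For family $(2)$, the Khovanov homology agrees with that of the amphichiral knot $K_{p,q}\#-K_{p,q}$, so the Khovanov mirror identity $\Kh^{i,j}(-L) \cong \Kh^{-i,-j}(L)$ forces $\qmax = -\qmin$ and hence $\qmax+\qmin = 0$. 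For $K$ in $(3) \setminus (1 \cup 2)$, amphichirality of $K$ itself combined with the same mirror identity yields the same conclusion.

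The principal obstacle I anticipate is the grading bookkeeping inside Lemma~\ref{lemma:khformula}: confirming that the $q/p$-dependent contributions from the two outer tangles cancel exactly in the sum, leaving the clean constant $\pm 8$ (and nothing else), is the only computationally substantive step. Everything beyond that — the case split, the connect-sum case, and the family $(3)$ case — is essentially automatic from Khovanov mirror symmetry together with the identification of Lisca's families with the two cases of Theorem~\ref{thm:main}.
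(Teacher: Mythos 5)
Your overall architecture matches the paper's proof: use Theorem~\ref{thm:main} to replace $K$ by one of the two Khovanov models (Montesinos with $n=\pm 2$ for family $(1)$, $K_{p,q}\#-K_{p,q}$, i.e.\ $n=0$, for family $(2)$), compute extremal $q$-gradings from Lemma~\ref{lemma:khformula}, and dispose of family $(3)$ by amphichirality. Your shortcut of handling family $(2)$ via the mirror identity $\Kh^{i,j}(-L)\cong\Kh^{-i,-j}(L)$ rather than plugging $n=0$ into the formula is fine, and the matching of families to cases is correct (family $(2)$ is by definition the opposite-sign closures $\gamma\sigma_2^{\pm1}\gamma^{-1}\sigma_2^{\mp1}$, which produce $K'_{-}\simeq K_{p,q}\#-K_{p,q}$ in the proof of Theorem~\ref{thm:main}, while the proof of Theorem~\ref{thm:combined-lbs} exhibits family $(1)$ as same-sign closures). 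Note, though, that your ``a posteriori'' fallback for the matching is circular as stated: Theorem~\ref{thm:main} alone would only give $\qmax+\qmin\in\{0,\pm 8\}$ for family $(1)$, so you cannot use the value $\pm 8$ to select the case whose validity you are trying to establish.

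The genuine gap is in the step you defer as ``grading bookkeeping.'' First, you mischaracterise what must be checked: in Lemma~\ref{lemma:khformula} there are no $q/p$-dependent shifts from the outer tangles to cancel --- the formula depends on $(p,r)$ only through $m$ and the coefficients $a_i$ of $V(K_{p,r}\#-K_{p,r})$. What actually has to be verified is that the candidate extremal terms of the polynomial
\[
q^{-1}\Bigl(1+q^2+(1+tq^4)\,t^nq^{2n}\textstyle\sum_{k=-m}^{m-1}b_kt^kq^{2k}\Bigr)
\]
really are extremal. This requires two facts the paper proves and you omit: (i) the outermost coefficients survive, namely $b_{m-1}=b_{-m}=(-1)^{m}a_m\neq 0$; and (ii) $m\geq 3$, so that for $n=-2$ the degrees $2n\pm(2m+1)$ still beat the $q^{\pm1}$ coming from the $1+q^2$ part --- without a lower bound on $m$ the maximum could be attained at $q^{1}$ and the sum would be $0$ rather than $-8$, so the statement for family $(1)$ would fail. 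The paper gets $m\geq 3$ from the breadth of the Jones polynomial of the alternating knot $K_{p,r}\#-K_{p,r}$: $4m=\mathrm{br}=2c(K_{p,r}\#-K_{p,r})\geq 2c(K_{3,1}\#-K_{3,1})=12$. Only after both points does one conclude $\qmax(K)+\qmin(K)=4n$, giving $\pm 8$ or $0$ according to $n=\pm2$ or $n=0$. You should also dispatch the degenerate case $p=1$ (there $\Kh(K)=q+q^{-1}$, so $K$ is the unknot and lies in family $(2)$, with sum $0$).
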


Corollary~\ref{cor:qmax} gives another way to show that family $(1)$ is disjoint from $(2) \cup (3)$. It also provides a method for obstructing slice knots from having (regular) braid index 3, as illustrated by the following example.

\begin{ex}
    Let $K$ be the stevedore knot $6_1$. Because $K$ is not a torus knot and can be expressed as a 4-braid closure (see Figure~\ref{fig:su-intro}), we have $b(K) \in \{ 3,4 \}$. However, $\qmax(K) + \qmin(K) = 4$, so by Corollary~\ref{cor:qmax} we conclude that $b(K) = 4$.\footnote{We note that this a well-known result that also follows, e.g., from the MFW inequality.}
\end{ex}


Moreover, if the knot $K$ in the statement of Theorem~\ref{thm:main} has $\det(K)=1$, it follows that its Khovanov homology is isomorphic to that of $K_{1,q} \# -K_{1,q}$ or $K[\frac{q}{1},\pm\frac{1}{2},-\frac{q}{1}]$, which represent the unknot. Since Khovanov homology detects the unknot~\cite{kronheimermrowka:unknot}, we combine Theorem~\ref{thm:main} with the fact that knots in Lisca's family $(3)$ are amphichiral to get the following corollary. 

\begin{cor}{\label{cor:slice-obs}}
    Let $K$ be a chiral knot with $\det(K)=1$ and $b(K) \leq 3$. Then $K$ has infinite concordance order; in particular, $K$ is not slice.
\end{cor}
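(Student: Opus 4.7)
The plan is to argue by contradiction and assume that $K$ has finite concordance order. Since $b(K) \leq 3$, the knot $K$ can be represented as the closure of a 3-braid, so Theorem~\ref{thm:combined-lbs} places $K$ into one of the forms $(\mathrm{A})$ or $(\mathrm{B})$. If $K$ arises from form $(\mathrm{B})$, the theorem further asserts that $K$ is amphichiral, immediately contradicting the chirality assumption; hence $K$ must come from form $(\mathrm{A})$. Matching this form with the definition of a 3-SU braid given in Section~\ref{sec:intro}, we see that $K$ is a ribbon knot with $b_s(K) \leq 3$.

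I would then invoke Theorem~\ref{thm:main}. Since $\det(K) = 1$, the partial rational knot $K_{p,q}$ it provides satisfies $p^2 = 1$, so $p = 1$ and $K_{1,q}$ is the unknot. Consequently, $\Kh(K)$ is isomorphic either to $\Kh(K_{1,q} \# -K_{1,q})$ or to $\Kh(K[\frac{q}{1}, \pm \frac{1}{2}, -\frac{q}{1}])$, and both right-hand sides are the Khovanov homology of the unknot, the second case requiring a brief Montesinos simplification in which the outer $\pm q$ integer tangles cancel.

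Finally, by the Kronheimer--Mrowka unknot detection theorem~\cite{kronheimermrowka:unknot}, the isomorphism $\Kh(K) \cong \Kh(\text{unknot})$ forces $K$ itself to be the unknot, which is amphichiral. This contradicts the chirality of $K$ and completes the argument: $K$ must have infinite concordance order, and in particular is not slice.

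Structurally the proof is a short composition of three inputs, namely Theorem~\ref{thm:combined-lbs}, Theorem~\ref{thm:main}, and the unknot detection theorem, so no step is technically heavy. The single subpoint that calls for an independent verification is the claim that $K[\frac{q}{1}, \pm \frac{1}{2}, -\frac{q}{1}]$ is the unknot; this is the most delicate part, though it reduces to a routine Montesinos tangle manipulation once the notation is unpacked.
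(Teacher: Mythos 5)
Your proof is correct and follows essentially the same route as the paper: the paper likewise combines Theorem~\ref{thm:combined-lbs} (equivalently, the amphichirality of Lisca's family $(3)$) to reduce to $b_s(K)\leq 3$, then applies Theorem~\ref{thm:main} with $p=1$ and Khovanov unknot detection to force $K$ to be the unknot, contradicting chirality. Your observation that $K[\frac{q}{1},\pm\frac{1}{2},-\frac{q}{1}]$ is the unknot is also consistent with the paper, and is independently confirmed by the $p=1$ case of Lemma~\ref{lemma:khformula}.
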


Corollary~\ref{cor:slice-obs} can also give lower bounds on the braid index of slice knots.

\begin{ex}
    The Kinoshita--Terasaka family $\{K_n\}_{n \in \mathbb{Z} \setminus \{0\}}$ of one-twist SU knots from~\cite{kt:original}, shown in Figure~\ref{fig:kt}, is chiral by~\cite{kose:amphichiral}. Furthermore, $\det(K_n) = 1$ for any $n$ since $K_n$ admits the unknot as a partial knot; see Theorem~\ref{thm:det}. Then, by Corollary~\ref{cor:slice-obs}, we have $b(K_n) \geq 4$ for any $n$.
\end{ex}

\begin{figure}[h]
        \centering
        \includegraphics[width=0.35\textwidth]{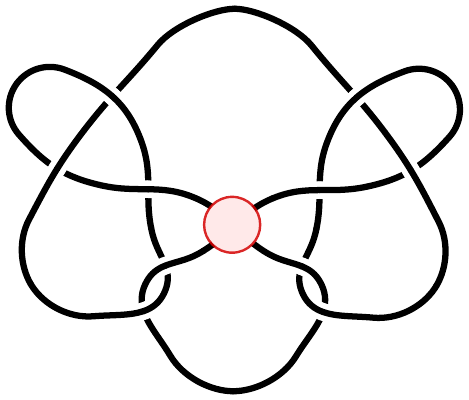}
        \begin{overpic}[width=\linewidth,height=0.001in]{figures/blank.png}
        \put(49.1,16.2) {$\tau$}
        \end{overpic}
        \captionsetup{width=0.9\linewidth}
        \caption{The Kinoshita--Terasaka family $\{K_n\}_{n \in \mathbb{Z} \setminus \{0\}}$ is obtained by inserting $n$-tangles in the twist region $\tau$.}
        \vspace{-1em}
        \label{fig:kt}
\end{figure}

\subsection*{Structure of the paper} In Section~\ref{sec:su} we define SU knots and SU braids, and in Proposition~\ref{prop:2bridge} prove a crucial fact about two-bridge partial knots of an SU knot with $b_s(K) \leq 4$. In Section~\ref{sec:bs=3} (resp., Section~\ref{sec:kh}) we prove Theorem~\ref{thm:combined-lbs} (resp., Theorems~\ref{thm:main} and~\ref{thm:bs>b}). Section~\ref{sec:search} contains data on symmetric braid indices of SU knots with at most 11 crossings, while Section~\ref{sec:future} proposes some open questions.

\subsection*{Acknowledgements} VB was supported by the Austrian Science Fund grant `Cut and Paste Methods in Low Dimensional Topology' and gratefully acknowledges the hospitality of the University of Georgia, where this project was conceived.

\section{Symmetric unions and their braidings}
\label{sec:su}

Let $D_J \subset \mathbb{R}^2$ be an unoriented diagram of a knot $J$. After choosing a straight line axis $A \subset \mathbb{R}^2$ with $D_J \cap A = \varnothing$, reflect $D_J$ about $A$ to get a diagram $-D_J$ of the mirror image of $J$. Then, for some $k \geq 0$, choose $k+1$ discs $T_0, T_1, \dots, T_k$ embedded disjointly in $\mathbb{R}^2$ such that each $T_i$ is invariant under reflection about $A$ and intersects the diagram $D_J \sqcup -D_J$ in two unknotted arcs. Fix an integer $\mu$ such that $1 \leq \mu \leq k+1$. Viewing each $T_i$ as a rational 0-tangle, replace it by an $m_i$-tangle $T_i'$, where $m_i = \infty$ for $0 \leq i \leq \mu-1$ and $m_i \in \mathbb{Z} \setminus \{0\}$ for $\mu \leq i \leq k$. Choose an orientation on the resulting diagram and denote it by $D_L = (D_J \sqcup -D_J)(\infty_\mu, n_1, \dots, n_l)$, where $l = k - \mu + 1$ and $n_i = m_{\mu -1 + i}$, and call the tangles $T_{\mu}', \dots, T_k'$ \emph{twist regions}. The construction is illustrated in Figure~\ref{fig:su}.

\begin{figure}[h]
\centering
\begin{overpic}
    [scale=.18]{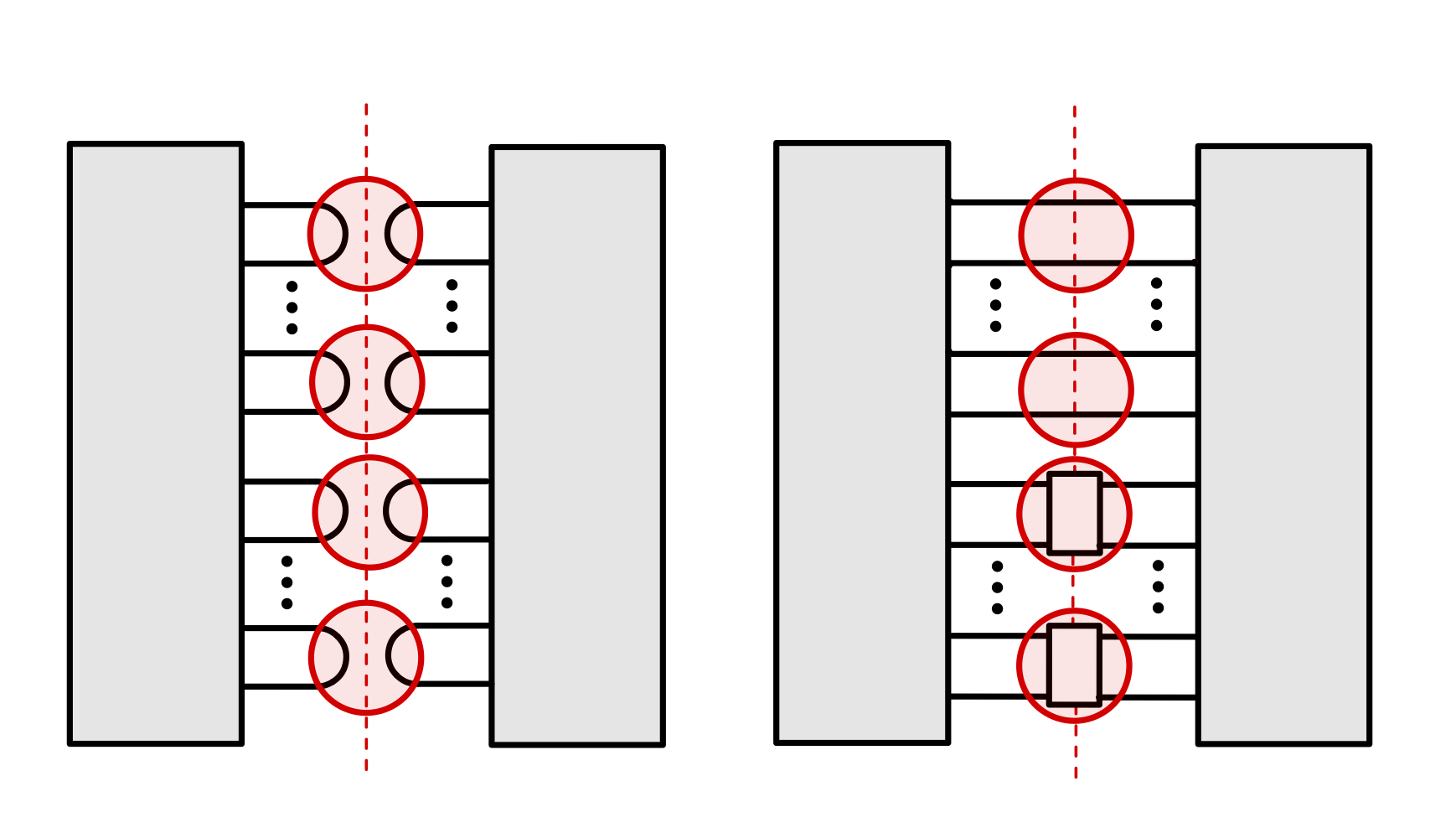}
    \put(21,46){$\scriptstyle T_0$}
    \put(21,35.8){$\scriptstyle T_{\mu-1}$}
    \put(20.5,26.4){$\scriptstyle T_{\mu}$}
    \put(21,16.5){$\scriptstyle T_{k}$}
    \put(70,46){$\scriptstyle T_0'$}
    \put(70,35.6){$\scriptstyle T_{\mu-1}'$}
    \put(68.5,26.1){$\scriptstyle T_{\mu}'$}
    \put(69.5,16.4){$\scriptstyle T_{k}'$}
    \put(72.5,21.7){$\scriptstyle n_1$}
    \put(72.5,11.4){$\scriptstyle n_l$}
    \put(8.5,25){$D_J$}
    \put(36.5,25){$-D_J$}
    \put(57,25){$D_J$}
    \put(85,25){$-D_J$}
    \put(26,47){$A$}
    \put(75,47){$A$}
    \put(47,25){$\longrightarrow$}
\end{overpic}
\vspace{-1em}
\caption{Construction of the symmetric union diagram $(D_J \sqcup -D_J)(\infty_\mu,\allowbreak n_1, \dots, n_l)$; rectangles labelled $n_1, \dots, n_l$ contain the respective number of `vertical' half-twists.}\label{fig:su}
\end{figure}

\begin{defn}
\label{def:sulink}
    A link $L$ is a \emph{symmetric union (SU) link} if it admits a diagram $D_L$ as above for some $J$, $D_J$, $\mu$, $l$ and $n_i$ for $1 \leq i \leq l$. The knot $J$ is called a \emph{partial knot} of $L$.
\end{defn}

Observe (cf.~\cite[Remark~2.2]{lamm:original}) that an SU link $L$ has $\mu$ components, hence $L$ is a knot if and only if $\mu = 1$; in the following, we shall primarily be concerned with this case.

In general, an SU knot $K$ can admit many different partial knots; however, the determinant of any partial knot is prescribed by that of $K$.

\begin{thm}[{\cite[Theorem~2.6]{lamm:original}}]
    \label{thm:det}
    If $K$ is an SU knot, then for any partial knot $J$ of $K$ we have $\det(J)^2 = \det(K)$.
\end{thm}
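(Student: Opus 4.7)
The plan is to compute $\det(K)$ via the Goeritz form of a symmetric union diagram $D_K$ and exploit the axial reflection to force a square factorisation. I would first choose a checkerboard colouring of $D_K$ invariant under the reflection about the axis $A$, arranging $A$ to lie in a black region (swapping colours if necessary). The white regions then split into those meeting $A$ (fixed by the involution) and those forming symmetric pairs. Ordering the white regions so that paired regions appear consecutively, the Goeritz matrix takes the block form
\[
G_K = \begin{pmatrix} M & B & B \\ B^T & P & Q \\ B^T & Q^T & P \end{pmatrix},
\]
where $M, P$ record self-interactions of fixed and paired regions, and $B, Q$ record interactions across the axis. The change of basis $(x, y, z) \mapsto (x, y + z, y - z)$ block-diagonalises $G_K$ and yields
\[
\det G_K \;\doteq\; \det\!\begin{pmatrix} M & B \\ B^T & P + Q \end{pmatrix} \cdot \det(P - Q),
\]
where $\doteq$ denotes equality up to sign.

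The next step is to identify each factor with $\pm \det(J)$. Folding $D_K$ along $A$ recovers $D_J$: strands that crossed the axis fold onto arcs of $D_J$, and the axial tangles $T_0, \dots, T_k$ become local modifications supported near the axis. The block $P - Q$ is then precisely the Goeritz matrix of $D_J$, because in the $(-)$-fold the across-axis interactions cancel pairwise except for contributions from axial tangles, which are absorbed into diagonal entries already present in $P$. The $(+)$-block requires more care, but an analogous folding argument yields $|\det| = \det(J)$ as well. Multiplying the two factors gives $\det(K) = \det(J)^2$.

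The principal obstacle is bookkeeping: tracking how each axial tangle---especially the integer twist regions $n_1, \ldots, n_l$---contributes to $P$ and $Q$ involves delicate sign and orientation conventions, and the positioning of $A$ relative to each $T_i$ affects which regions merge under the fold. A cleaner alternative, sidestepping most of this bookkeeping, is induction on $l$. The base case $l = 0$ yields a diagram obtained from $D_J \sqcup -D_J$ by inserting a single $\infty$-tangle $T_0$ on the axis, which one checks to be a band sum along a trivial band and hence isotopic to $J \# -J$; multiplicativity of the determinant under connect sum then gives $\det(K) = \det(J) \det(-J) = \det(J)^2$. For the inductive step, resolving a crossing inside a twist region via a Kauffman-bracket skein relates $D_K$ to an SU diagram with one fewer twist on one side and a two-component link whose determinant vanishes on the other, so the twist contributes only to a global normalisation that preserves the square.
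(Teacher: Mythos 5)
This theorem is quoted from Lamm's paper and not proved in the present article, so the relevant comparison is with Lamm's original argument, which is essentially the skein induction you describe as your ``cleaner alternative''. The genuine gap in that route is the clause ``a two-component link whose determinant vanishes'': you assert, but do not prove, that the $\infty$-resolution of a crossing in a twist region --- which converts that twist region into an $\infty$-tangle and hence produces a symmetric union diagram with $\mu = 2$ components --- has determinant zero. This is not a generic fact about two-component links; it is a specific property of symmetric union links with $\mu \geq 2$, and it is exactly the auxiliary lemma that carries the proof. Without it, the unoriented skein relation only gives $\det(D) = \pm\det(D_0) \pm \det(D_\infty)$ with uncontrolled signs, so nothing forces $|\det|$ to be preserved and the induction does not close. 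The lemma itself requires an argument (for instance a further induction on the remaining twist regions of the multi-component link, reducing to $(D_J \sqcup -D_J)(\infty_\mu)$, or the bracket factorisation for symmetric diagrams). Granting it, the rest of your induction, including the base case $(D_J \sqcup -D_J)(\infty_1) = J \# -J$ and multiplicativity of the determinant under connected sum, is fine.

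Your primary (Goeritz) route has a more concrete defect: the block form you write down is not correct as stated. The reflection about $A$ is orientation-reversing on the plane, and the Goeritz type $\eta(c)$ of a crossing depends on a choice of planar orientation, so the reflection reverses the type of every crossing it permutes; consequently the two diagonal blocks attached to the paired white regions differ by a sign rather than being equal, and the off-axis contributions to the two copies of $B$ likewise acquire opposite signs. Moreover the diagram is not actually invariant under the reflection inside the twist regions (a vertical half-twist reflects to its inverse), so the crossings on the axis break the symmetry before you even reach the change of basis. These are the ``bookkeeping'' issues you flag, but they are not mere bookkeeping: as written the matrix identity is false, and the identification of $P - Q$ with a Goeritz matrix of $D_J$ is not established. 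I would drop this route and instead complete the skein induction by proving the vanishing lemma for $\mu \geq 2$.
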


By~\cite[Theorem~4.3]{lamm:original}, a link $L$ is an SU link if and only if it can be represented as the closure of an \emph{SU braid} $\beta$, i.e., a braid given by
    \begin{equation*}
    \label{eq:beta}
    \tag{$\dagger$}
        \beta = \gamma C_1 \gamma^{-1} C_2 \in B_n
    \end{equation*}
for some $n \geq 1 $, where $\gamma \in B_n$ is arbitrary and $C_i$ are given by the following:
    \begin{enumerate}
        \item if $n = 1$, then $C_1$ and $C_2$ are trivial;
        \item if $n = 2$, then $C_1 = \textrm{id} \in B_2$ and $C_2 \in \{ \sigma_1^{\pm 1} \}$;
        \item if $n \geq 3$, then
            \begin{align*}
                C_1 &= \begin{cases}
                    \sigma_2^{\varepsilon_{12}} \sigma_4^{\varepsilon_{14}} \dots \sigma_{n-1}^{\varepsilon_{1(n-1)}} & \textrm{if $n$ is odd} \\
                    \sigma_3^{\varepsilon_{13}} \sigma_5^{\varepsilon_{15}} \dots \sigma_{n-1}^{\varepsilon_{1(n-1)}} & \textrm{if $n$ is even, and}
                \end{cases}\\
                C_2 &= \begin{cases}
                    \sigma_2^{\varepsilon_{22}} \sigma_4^{\varepsilon_{24}} \dots \sigma_{n-1}^{\varepsilon_{2(n-1)}} & \textrm{if $n$ is odd} \\
                    \sigma_1^{\varepsilon_{21}} \sigma_3^{\varepsilon_{23}} \dots \sigma_{n-1}^{\varepsilon_{2(n-1)}} & \textrm{if $n$ is even}
                \end{cases}
            \end{align*}
        such that $\varepsilon_{ij} \in \{\pm 1\}$ for all $i$ and $j$.
    \end{enumerate}
    
\begin{defn}
    A diagram $D_L$ of a link $L$ is a \emph{braided SU diagram} if it is the standard diagram of the closure of an SU braid $\beta \in B_n$. The \emph{symmetric braid index} $b_s(L)$ of $L$ is the minimal $n \geq 1$ for which $L$ admits a braided SU diagram, with $b_s(L) = \infty$ if $L$ is not an SU link.
\end{defn}

Fix $m \geq 1$ and suppose that $K$ is an SU knot with $b_s(K) \in \{ 2m-1, 2m \}$. Then $K$ can be constructed by taking $D_J$ to be the plat closure of a braid $\gamma \in B_{2m}$. We illustrate the case $m = 2$ in Figure~\ref{fig:plat}, where the diagram $D_J$ is a 4-plat closure, and hence, provided $\det(K) \neq 1$, the partial knot $J$ is a two-bridge knot~$K_{p,q}$ for some $0 < q < p$ (see, e.g.,~\cite[Section~12.B]{bzh:knots}). Since $\det(K_{p,q}) = p$ for any $p$, by Theorem~\ref{thm:det} the following result holds.

\begin{prop}
\label{prop:2bridge}
    If $K$ has $b_s(K) \in \{3, 4\}$ and $\det(K) \neq 1$, then $K$ must admit a partial knot $J$ belonging to the finite set
        \[
            \mathcal{J}_p = \{ K_{p,q} : 0 < q < p \text{ and } p^2 = \det(K) \}.
        \]
    On the other hand, if $\det(K)=1$, then $K_{p,q}$ is necessarily the unknot $ K_{1,q}$.
\end{prop}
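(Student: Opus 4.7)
The plan is to chain two ingredients: the structural observation immediately preceding the statement and Theorem~\ref{thm:det}. By assumption $b_s(K) \in \{3,4\}$, which is the case $m = 2$ of the preceding paragraph; hence $K$ admits a braided SU presentation whose partial knot $J$ has a diagram $D_J$ that is a plat closure of some $\gamma \in B_4$. I would then invoke the standard two-bridge classification (as cited via~\cite[Section~12.B]{bzh:knots}) to conclude that such a 4-plat diagram represents either the unknot or a non-trivial two-bridge knot $K_{p,q}$; and, since $J$ is a knot, $p$ may be taken odd, so after applying the equivalence of rational fractions discussed in the introduction we may further normalise $0 < q < p$. In other words, $J \cong K_{p,q}$ for some such $p,q$.

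With this identification in hand, I would apply Theorem~\ref{thm:det}, which gives $\det(J)^2 = \det(K)$. Using $\det(K_{p,q}) = p$ (including the degenerate case $p = 1$, where $K_{1,q}$ is the unknot), the equation becomes $p^2 = \det(K)$, pinning $p$ down uniquely. When $\det(K) \neq 1$, we therefore have $p > 1$ and $J \in \mathcal{J}_p$, as required; when $\det(K) = 1$, we obtain $p = 1$, so $J = K_{1,q}$ is necessarily the unknot. I do not anticipate a genuine obstacle here: the proposition is essentially a bookkeeping consequence of the two cited results, and the only mild point of care is matching the normalisation $0 < q < p$ with the equivalence of two-bridge fractions highlighted earlier in the introduction.
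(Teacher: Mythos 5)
Your proposal is correct and follows the paper's own argument essentially verbatim: the paper likewise observes that for $m=2$ the partial knot is a $4$-plat closure, hence the unknot or a two-bridge knot $K_{p,q}$ with $0 < q < p$, and then combines $\det(K_{p,q}) = p$ with Theorem~\ref{thm:det} to force $p^2 = \det(K)$. No gaps; the normalisation point you flag is handled the same way in the paper.
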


\begin{figure}[h!]
\centering
\begin{overpic}
    [scale=.16]{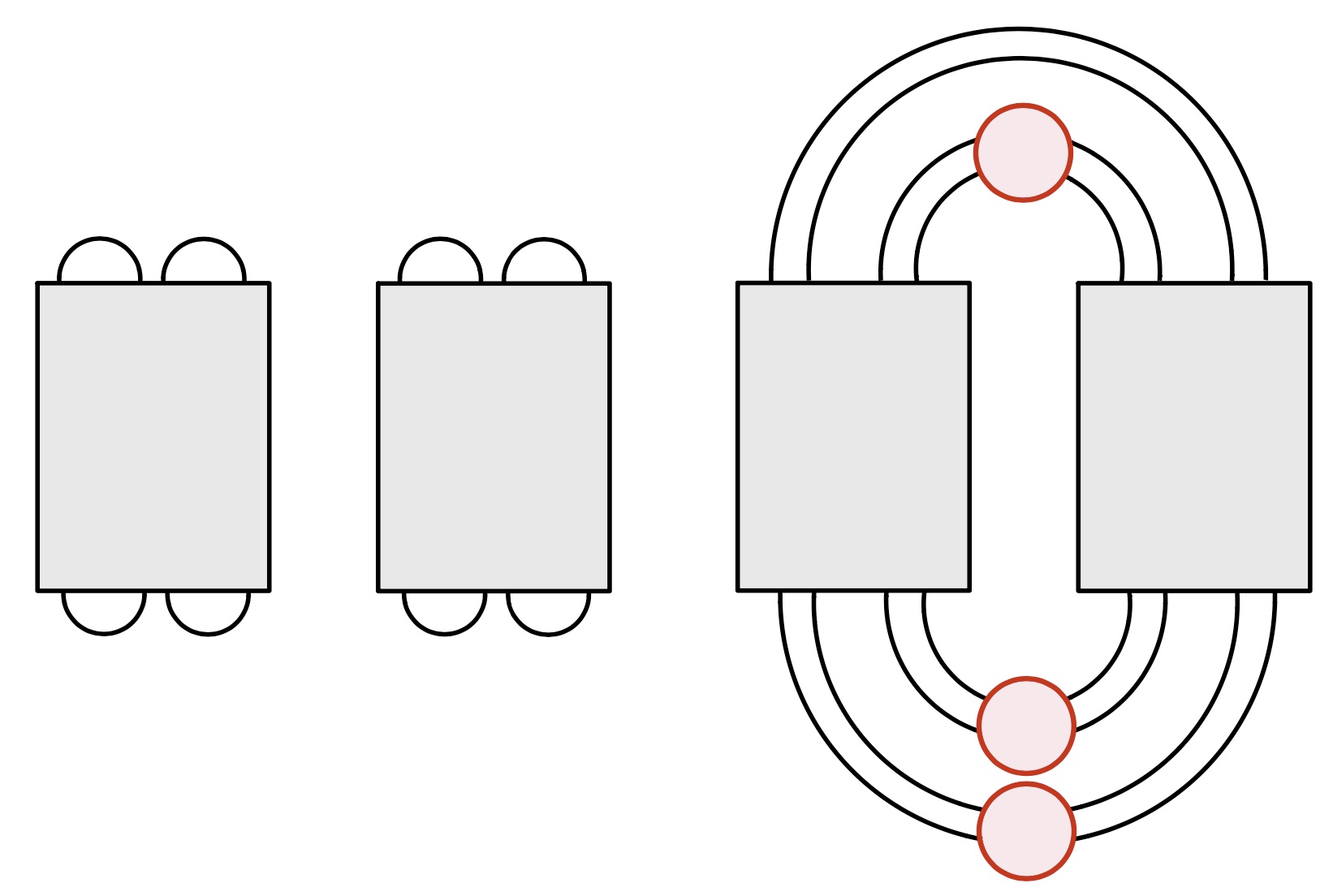}
    \put(10,14) {$J$}
    \put(33,14) {$-J$}
    \put(10,33) {\rotatebox{90}{$\gamma$}}
    \put(35,36) {\rotatebox{-90}{$\gamma^{-1}$}}
    \put(46.5,33) {$\longleftrightarrow$}
    \put(63,33) {\rotatebox{90}{$\gamma$}}
    \put(88,36) {\rotatebox{-90}{$\gamma^{-1}$}}
    \put(75.5,55){\scalebox{0.9}{$\tau_1$}}
    \put(75.5,12){\scalebox{0.9}{$\tau_2$}}
    \put(75.5,4){\scalebox{0.9}{$\tau_3$}}
    \put(60,4) {$K$}
\end{overpic}
\captionsetup{width=0.9\linewidth}
\caption{A knot $K$ with $b_s(K) \in \{ 3, 4 \}$ admits an SU diagram with the partial knot $J$ given by the plat closure of $\gamma \in B_4$, where each twist region $\tau_i$ contains one crossing. If $\gamma$ does not contain any $\sigma_1$ letters, then the topmost free strand in the right diagram can be moved all the way down so that the $\tau_3$ region can be removed via the Reidemeister I move; this yields the closure of an SU 3-braid. (Cf.~\cite[Figure~13]{lamm:original}.)}
\label{fig:plat}
\end{figure}

\section{Classification of knots with symmetric braid index 3}
\label{sec:bs=3}

First let us discuss the classification of finite concordance order 3-braid knots proved by Lisca in~\cite{lisca:3braids} and briefly introduced in Section~\ref{sec:intro}. The classification consists of three families $(1)$, $(2)$ and $(3)$; the following is an outline of their main properties:
\begin{itemize}
    \item family $(1)$ consists of knots $K$ such that $K$ or $-K$ is the closure of a braid of the form
        \[
        \label{eq:fam1}
        \tag{$*$}
            \beta = (\sigma_1 \sigma_2)^3 \sigma_1^{x_1} \sigma_2^{-y_1} \dots \sigma_1^{x_n} \sigma_2^{-y_n},
        \]
    where $n, x_i, y_i \geq 1$, $\sum_i y_i = \sum_i x_i + 4$, and all $x_i$ and $y_i$ are subject to a technical combinatorial condition; moreover, $\beta$ is quasipositive and $K$ is ribbon;
    \item family $(2)$ consists of braided SU knots obtained as closures of braids of the form
        \[
            \beta = \gamma \sigma_2^{\pm 1} \gamma^{-1} \sigma_2^{\mp 1},
        \]
    where $\gamma \in B_3$; in particular, $(2)$ contains the unknot;
    \item family $(3)$ consists of certain alternating amphichiral knots with dihedral symmetry obtained as closures of braids of the form
        \[
            \beta = \sigma_1^{x_1} \sigma_2^{-y_1} \dots \sigma_1^{x_n} \sigma_2^{-y_n},
        \]
    where $n \geq 2$ and $x_i, y_i \geq 1$.
\end{itemize}

Family $(1)$ is disjoint from $(2) \cup (3)$, but families $(2)$ and $(3)$ have non-trivial intersection. According to the KnotInfo database~\cite{knotinfo}, there are knots in $(2)$, such as $10_{48}$ and $12_{a1011}$, that are chiral, hence $ (2) \setminus (3)$ is non-empty. On the other hand, $(3) \setminus (2)$ contains all non-slice finite concordance order 3-braid knots, such as the figure-eight knot. In Theorem~\ref{thm:combined-lbs} we will show that, in fact, families $(1)$ and $(2)$ consist precisely of knots with $b_s(K) \leq 3$, whilst Theorem~\ref{thm:bs>b} exhibits ribbon knots in $(3) \setminus (2)$.

In order to state and prove Theorem~\ref{thm:combined-lbs}, we now recall some relevant terminology regarding 3-braids, established in~\cite{lisca:3braids}. For any $\beta \in B_3$ of the form
    \[
        \beta = (\sigma_1 \sigma_2)^{3t} \sigma_1^{x_1} \sigma_2^{-y_1} \dots \sigma_1^{x_n} \sigma_2^{-y_n}
    \]
with $n, x_i, y_i \geq 1$, $t \in \mathbb{Z}$, we define the \emph{associated string} $\mathbf{a}(\beta)$ of $\beta$ to be the string 
    \[
        (2^{[x_1-1]}, y_1 + 2, 2^{[x_2-1]}, y_2 + 2, \dots, 2^{[x_n - 1]}, y_n + 2),
    \]
where $2^{[m]}$ denotes the integer $2$ repeated $m$ times. The pair $(t, \mathbf{a}(\beta))$ uniquely recovers $\beta$ and determines the isotopy type of $\mathrm{cl}(\beta)$ up to cyclic rotation and reflection of $\mathbf{a}(\beta)$.

The \emph{linear dual} of a string $\mathbf{b} = (b_1, \dots, b_k)$ with all $b_i \geq 2$ is defined as follows:
    \begin{itemize}
        \item if $\mathbf{b} = (2^{[m]})$, then its linear dual is the string $\mathbf{c} = (m+1)$;
        \item if there exists $j$ such that $b_j \geq 3$, write $\mathbf{b}$ as
            $$
                \mathbf{b} = (2^{[m_1]}, 3 + n_1, 2^{[m_2]},\allowbreak 3 + n_2, \dots, 2^{[m_l]}, 2 + n_l)
            $$ with all $m_i, n_i \geq 0$;
        then its linear dual is the string
            $$
                \mathbf{c} = (2 + m_1, 2^{[n_1]},\allowbreak 3 + m_2, 2^{[n_2]}, 3 + m_3, \dots, 3 + m_l, 2^{[n_l]}).
            $$
    \end{itemize}
One can verify that the notion of a linear dual is symmetric. Now we prove the following.

\combinedlbs*

\begin{proof}[Proof of Theorem~\ref{thm:combined-lbs}]

First, we show that Lisca's families $(1)$ and $(2)$ are included in case $(\mathrm{A})$. In view of the discussion at the start of the section, it is, in fact, sufficient to show that knots in family $(1)$ are closures of 3-braids of the form $\gamma \sigma_2^{\pm 1} \gamma^{-1} \sigma_2^{\pm 1}$. To this end, we leverage previous work~\cite{simone:classification,brejevs-simone:chain} by Simone and the first author and Simone; it contains a classification of all links arising as closures of 3-braids of the form
\[
    \beta = (\sigma_1 \sigma_2)^{3t} \sigma_1^{x_1} \sigma_2^{-y_1} \dots \sigma_1^{x_n} \sigma_2^{-y_n}
\]
with $n, x_i, y_i \geq 1$ and $t \in \{ \pm 1 \}$, that are \emph{$\chi$-slice}, which is a generalisation of the usual notion of knot sliceness to links. In particular, this classification includes all knots in family $(1)$. It follows from Theorem~1.11 in~\cite{brejevs-simone:chain} that, up to mirroring, 
knots in family $(1)$ are closures of 3-braids with $t = -1$ whose associated strings lie in the family 
\[
    \mathcal{S}_{1a} = \{ (b_1, \dots, b_k, 2, c_l, \dots, c_1, 2 \mid k+l \geq 3 \},
\]
where $(b_1, \dots, b_k)$ and $(c_1, \dots, c_l)$ are linear duals. Furthermore, Section~2 in~\cite{brejevs:ribbon} and Figure~14 in~\cite{brejevs-simone:chain} imply that such knots emerge as closures of 3-braids of the form
    \[
        \beta' = \Delta^{-2} \sigma_1 \overline{\zeta} \sigma_1^2 \zeta^{-1} \sigma_1,
    \]
where $\zeta \in B_3$ is arbitrary, $\Delta = \sigma_1 \sigma_2 \sigma_1 = \sigma_2 \sigma_1 \sigma_2$ is the Garside element in $B_3$, and $\overline{\,\cdot\,} : B_3 \rightarrow B_3$ is the map defined by $\sigma_1 \mapsto \sigma_2$ and $\sigma_2 \mapsto \sigma_1$. Recalling that $\Delta^{-1} \sigma_1 = \sigma_2 \Delta^{-1}$ and $\sigma_2 \Delta^{-1} = \Delta^{-1} \sigma_1$, we can write $\Delta^{-1} \sigma_1 \overline{\zeta} = \sigma_2 \zeta \Delta^{-1}$, and hence
    \begin{align*}
        \beta' &= \Delta^{-1} \sigma_2 \zeta \Delta^{-1} \sigma_1^2 \zeta^{-1} \sigma_1 \\
        &= (\sigma_2^{-1} \sigma_1^{-1} \sigma_2^{-1}) \sigma_2 \zeta (\sigma_1^{-1} \sigma_2^{-1} \sigma_1^{-1}) \sigma_1^2 \zeta^{-1} \sigma_1 \\
        &= \sigma_2^{-1} \sigma_1^{-1} \zeta \sigma_1^{-1} \sigma_2^{-1} \sigma_1 \zeta^{-1} \sigma_1 \\
        & = \sigma_2^{-1} \gamma \sigma_2^{-1} \gamma^{-1} \\
        &\sim \gamma \sigma_2^{-1} \gamma^{-1} \sigma_2^{-1},
    \end{align*}
where $\gamma = \sigma_1^{-1} \zeta \sigma_1^{-1}$ and $\sim$ denotes conjugacy in $B_3$. Since closures of conjugate braids are isotopic, it follows that knots in family $(1)$, up to mirroring, are SU knots with symmetric braid index 3 and corresponding SU braids given by
    \[
        \beta'' = \gamma \sigma_2^{\pm 1} \gamma^{-1} \sigma_2^{\pm 1}.
    \]

Now we show that Lisca's family $(3)$ is included in case $(\mathrm{B})$. Comparing the definition of family $(3)$ in~\cite{lisca:3braids} with Section~1 in~\cite{simone:classification} (particularly with Example~1.3 and the discussion after Corollary~1.11), one can see that knots in family $(3)$ correspond to closures of alternating 3-braids whose associated strings belong to the family
    \[
        \mathcal{S}_{2c} = \{ b_1 + 1, b_2, \dots, b_{k-1}, b_k + 1, c_1, \dots, c_l \mid k + l \geq 2 \}.
    \]
This yields case $(\mathrm{B})$.

Clearly, in case $(\mathrm{A})$ the knot $K$ is ribbon as it is an SU knot. The statements about quasipositivity and amphichirality also follow immediately from Theorem~1.1 in~\cite{lisca:3braids}, which concludes the proof.
\end{proof}

\section{Khovanov homology of knots with symmetric braid index 3}
\label{sec:kh}

Having understood the placement of SU knots with $b_s(K) \leq 3$ among all finite concordance order 3-braid closures, we now derive their Khovanov-homological characterisation. Let us first define an auxiliary class of SU knots to be used in the following, studied previously in~\cite{moore:su}.

\begin{defn}
\label{def:sufusion}
    Let $K_n$ denote the SU knot $(D_J \sqcup -D_J)(\infty_1, n)$ for some partial knot $J$. If the SU knot $(D_J \sqcup -D_J)(\infty_2)$ obtained by replacing the axial $n$-tangle by the $\infty$-tangle is the two-component unlink, say that $K_n$ has \emph{symmetric fusion number one}.
\end{defn}

Such knots $K_n$ are shown to be exactly the Montesinos knots $ K[\frac{q}{p}, \frac{1}{n},- \frac{q}{p}]$ and their Khovanov homology is computed in~\cite{kose:knotinv}. This generalises \cite[Lemma 10]{moore:su}. 

\begin{thm}[\cite{kose:knotinv}]
\label{thm:kose}
Let $K_n$ be an SU knot that admits a symmetric fusion number one diagram $(D_J \sqcup -D_J)(\infty_1, n)$. Then $K_n$ is a Montesinos knot $K[\frac{q}{p},\frac{1}{n},-\frac{q}{p}]$ and $J = K_{p,q}$ is either the unknot or a two-bridge knot. 
\end{thm}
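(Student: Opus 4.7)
The plan is to establish two claims: (i) the partial knot $J$ must be either the unknot or a two-bridge knot $K_{p,q}$, and (ii) once $J = K_{p,q}$ is identified, reading the diagram of $K_n$ along the axis recovers the Montesinos form $K[\frac{q}{p},\frac{1}{n},-\frac{q}{p}]$.

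First I would unpack the symmetric fusion number one hypothesis. By definition, the diagram $(D_J \sqcup -D_J)(\infty_2)$, obtained by replacing the axial $n$-twist region by an $\infty$-tangle, represents the two-component unlink. Since each axial $\infty$-tangle joins the two arcs of the surrounding diagram meeting it by horizontal arcs across the axis, this is equivalent to saying that $D_J$, together with the two arcs coming from the $\infty$-tangles on its side of the axis, forms an unknotted diagram (and similarly for $-D_J$ on the other side). Combining this with the reflective symmetry of the construction in the axis $A$, the next step is to argue that the pair consisting of $D_J$ and the two axial arcs is in fact a bridge presentation with exactly two bridges, hence that $D_J$ is the numerator closure of a rational tangle $T$ of some slope $q/p$. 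By the Schubert correspondence between rational tangles and two-bridge knots, this identifies $J$ with $K_{p,q}$, with $p = 1$ recovering the unknot case.

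Having identified $J = K_{p,q}$, claim (ii) is essentially a tangle-calculus reading. The SU diagram of $K_n$ decomposes along the axis into three rational tangles arranged in a vertical sum: the tangle $T$ of slope $\frac{q}{p}$ coming from $D_J$, the twist region of slope $\frac{1}{n}$ occupying the disc $T_1$, and the mirror tangle of slope $-\frac{q}{p}$ coming from $-D_J$. This is precisely the Montesinos presentation $K[\frac{q}{p},\frac{1}{n},-\frac{q}{p}]$, completing the proof.

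The main obstacle is the rational-tangle extraction in the first step. The analogous fact in~\cite[Lemma~10]{moore:su} is proved only for the unknotted partial knot, so the generalisation must deduce, from the unlink constraint and the axial symmetry alone, that an \emph{arbitrary} $D_J$ compatible with a symmetric fusion number one diagram is automatically the numerator closure of a rational tangle and not merely a generic bridge diagram. Once this structural statement is established, the Montesinos identification in the second step reduces to a routine manipulation of rational tangles along the axis.
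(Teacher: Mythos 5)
First, a point of context: the paper does not prove Theorem~\ref{thm:kose} at all --- it is imported verbatim from \cite{kose:knotinv} --- so there is no in-paper argument to compare yours against. Judged on its own terms, your proposal has a genuine gap, and you have in effect flagged it yourself: the entire content of the theorem is concentrated in the step you defer. Your reformulation of the hypothesis as ``$D_J$, together with the two arcs coming from the $\infty$-tangles on its side of the axis, forms an unknotted diagram'' is already inaccurate: the $\infty$-tangles join $D_J$ to $-D_J$ \emph{across} the axis, so the symmetric fusion number one condition is a statement about the symmetric double of the arc-decorated diagram --- both components must be unknotted \emph{and} mutually split --- and it is not obvious that this can be recast as a condition on $D_J$ alone. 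From there, the claim that the two marked arcs give a two-bridge presentation of $J$, and moreover that the complement of these arcs in the \emph{given, arbitrary} diagram $D_J$ is a rational tangle that can be standardised equivariantly and relative to the twist region (which is what your step (ii) silently requires in order to read off $K[\frac{q}{p},\frac{1}{n},-\frac{q}{p}]$), is precisely the theorem. Declaring the remainder ``a routine manipulation of rational tangles'' assumes the conclusion.

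Concretely, the missing idea is a mechanism that converts the global unlink condition into the local structural statement that the $4$-ended tangle complementary to the twist region is a sum of a rational tangle with its mirror. A generic tangle whose $\infty$-closure is a two-component unlink need not be rational, so some input beyond the unlink condition --- the mirror symmetry exploited via, e.g., the double branched cover (where the condition says a certain knot in $S^1\times S^2$ admits a reducible or $\Sigma_2(J)\#-\Sigma_2(J)$ surgery) or an equivariant innermost-disc argument --- is indispensable, and your outline does not supply it. If you want to prove this result rather than cite it, that is where the work lies; as written, the proposal is a plan whose critical step is acknowledged but not carried out.
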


Recall that the Khovanov homology of a link $L$ can be compactly described by its \textit{Khovanov polynomial} $\Kh(L)(t,q)$, given by

\begin{align*}
    \Kh(L)(t,q) = \sum_{i,j} \mathrm{rank}\,(\Kh^{i,j}(L))\,t^iq^j.
\end{align*}

\begin{lemma}[\cite{kose:knotinv}]
\label{lemma:khformula}
Let $K_n$ denote the Montesinos knot $ K[\frac{r}{p}, \frac{1}{n},- \frac{r}{p}]$ and let $$ V(K_{p,r} \# -K_{p,r}) = \sum_{i=-m}^{m} a_i q^{2i}$$ such that $a_m \neq 0$ be the Jones polynomial of $K_{p,r} \# -K_{p,r}$.
Then the Khovanov homology of $K_n$ is determined by $n$ and the coefficients of $V(K_{p,r} \# -K_{p,r})$ as follows:

\begin{align*}
    \Kh(K_n)(t,q)  = 
\begin{cases}
q^{-1} + q & \text{ if $p=1$,} \\
    q^{-1}  \left( 1 +q^2 + (1+tq^4)  t^n q^{2n}  \sum_{k=-m}^{m-1} b_k t^k q^{2k}   \right) & \text{otherwise},
\end{cases}    
\end{align*}

\noindent
where $b_k = b_{-k-1}$ and $b_k = (-1)^{k+1}\sum_{i=k+1}^{m} a_i$ for $k \in \{0,1, \dots , m-1\}$.  
\end{lemma}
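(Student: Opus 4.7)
The plan is to prove the formula by induction on $|n|$, applying the unoriented Khovanov skein exact triangle to one of the crossings in the axial twist region and exploiting Khovanov thinness of the auxiliary links that arise.

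First I would dispose of the case $p = 1$: here Theorem~\ref{thm:kose} gives that $J = K_{1,r}$ is the unknot, and a direct isotopy argument (generalizing Moore's Lemma~10 in \cite{moore:su}) shows that $K_n = K[\tfrac{r}{1}, \tfrac{1}{n}, -\tfrac{r}{1}]$ is itself the unknot for every $n$, so $\Kh(K_n)(t,q) = q^{-1} + q$ as on the first line of the formula.

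For $p > 1$, I would fix a crossing $c$ in the axial twist region and apply the unoriented Khovanov skein triangle
\[
    \cdots \longrightarrow \Kh(L_0) \longrightarrow \Kh(K_n) \longrightarrow \Kh(L_1) \longrightarrow \cdots
\]
with appropriate $(i,j)$-grading shifts. One resolution of $c$ reduces the twist count by one and returns $K_{n-1}$, while the other opens up the axial tangle. Iterating the triangle through the full twist region and invoking the symmetric fusion number one hypothesis, the two extremes can be identified as the two-component unlink (with Khovanov polynomial $(q^{-1}+q)^2$) and, after isotopy, the connected sum $K_{p,r} \# -K_{p,r}$, whose Jones polynomial $V(K_{p,r} \# -K_{p,r}) = \sum_i a_i q^{2i}$ provides the input data. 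Because two-bridge knots and their mirror-symmetric connected sums are Khovanov $H$-thin, the Khovanov polynomial of $K_{p,r} \# -K_{p,r}$ is determined by its Jones polynomial together with its vanishing signature.

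The principal obstacle will be bookkeeping the grading shifts and confirming that the connecting homomorphisms in each skein long exact sequence split as expected. To bypass this cleanly, I would establish that every $K_n$ is quasi-alternating---by iterating the Champanerkar--Kofman criterion on the $\tfrac{1}{n}$-tangle, starting from the alternating base case $K_{p,r} \# -K_{p,r}$---which forces Khovanov thinness along two adjacent $\delta$-gradings by Manolescu--Ozsv\'ath. Granted thinness, the Khovanov polynomial is determined by the Jones polynomial and the signature alone, and a Kauffman-bracket recursion on the twist region expresses $V(K_n)$ as an explicit $\mathbb{Z}[q^{\pm 1}]$-linear combination of $V(K_{p,r} \# -K_{p,r})$ and $V(U_2)$ whose coefficients depend on $n$. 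Solving this recursion in terms of the $a_i$ produces precisely the alternating tail sums defining the $b_k$; after separating out the unknot summand $q^{-1} + q$, the remaining contributions assemble into $q^{-1}(1 + tq^4)t^n q^{2n} \sum_{k=-m}^{m-1} b_k t^k q^{2k}$, yielding the stated closed form.
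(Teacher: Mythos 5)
The paper itself offers no proof of Lemma~\ref{lemma:khformula}: it is imported verbatim from \cite{kose:knotinv}, so your proposal can only be judged on its own terms. Your framing is sensible in places --- the $p=1$ reduction to the unknot is correct, and resolving the axial twist region does interpolate between the two-component unlink and $K_{p,r}\#-K_{p,r}$, whose Khovanov homology is indeed determined by its Jones polynomial and vanishing signature because it is alternating. The problem is the step you introduce precisely to avoid the hard part, namely the claim that every $K_n$ is quasi-alternating.

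That claim is false, and the mechanism you propose for it cannot be run. First, the Champanerkar--Kofman twisting criterion requires a quasi-alternating crossing, i.e.\ both resolutions quasi-alternating with $\det(L)=\det(L_0)+\det(L_1)$ and both determinants positive. At an axial crossing of $K_n$ one resolution is $K_{n\mp 1}$ and the other is, after isotopy, either the two-component unlink (determinant $0$, not quasi-alternating) or $K_{p,r}\#-K_{p,r}$ (determinant $p^4$); since $\det(K_n)=p^2$ for every $n\neq 0$ by Theorem~\ref{thm:det}, the additivity of determinants fails in either case, so the criterion cannot be applied even once starting from the alternating base case. Second, and decisively, the conclusion itself fails: $K[\tfrac{1}{3},\tfrac{1}{3},-\tfrac{1}{3}]=P(3,3,-3)=9_{46}$ lies in the family covered by the lemma and is homologically thin but \emph{not} quasi-alternating by a theorem of Greene (``Homologically thin, non-quasi-alternating links''); $10_{140}$, which appears in Table~\ref{tab:bs}, is another such example. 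Since thinness is exactly what makes ``Jones polynomial plus signature determines $\Kh$'' available, and your only route to thinness collapses, the proposal reduces to the skein-exact-triangle bookkeeping and connecting-homomorphism analysis that you explicitly set aside --- which is the actual content of the computation in \cite{kose:knotinv} (cf.\ the tangle-replacement technique behind Theorem~\ref{thm:watson}). Your Kauffman-bracket recursion, as it stands, only pins down the graded Euler characteristic of $\Kh(K_n)$, not the homology.
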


\begin{rem}\hfill
    \begin{enumerate}[(a)]
        \item For $p>1$, it is guaranteed that $m \geq 1$ since $V(K_{p,r}) \neq 1$ for any two-bridge knot $K_{p,r}$. Thus the formula for $\Kh(K)(t,q)$ in Lemma~\ref{lemma:khformula} is well-defined.
        \item By Proposition~\ref{prop:2bridge}, there are only finitely many possible Khovanov homologies for an SU knot with $b_s(K)=3$ and fixed determinant.
    \end{enumerate}
\end{rem}


The SU knots with symmetric braid index 3 appear in~\cite{watson:identicalkh} as a subset of a family of knots $K_\gamma(T,U)$, where $\gamma \in B_3$, and $T$ and $U$ are tangles. The tangle $T^\sigma$ is obtained by adding a half-twist to $T$, as shown in Figure~\ref{fig:TU-knots}, and the tangle $T^{\overline{\sigma}}$ is obtained by adding the opposite half-twist to $T$ in the same way.

We say that the tangle pair $(T,U)$ is \emph{compatible} if the writhe $w(K_\gamma(T,U)) = w(K_\gamma(T^\sigma, U^{\overline{\sigma}}))$; the notion of compatibility is independent of $\gamma$ and is purely diagrammatic since writhe is not a knot invariant. The sum $T + U$ of two tangles is defined by stacking $T$ on top of $U$. This generalises the half-twist action: $T^\sigma$ (resp., $T^{\overline{\sigma}}$) may be denoted $T + \poscrossing$ (resp., $T + \negcrossing$). A tangle $T$ is called \emph{simple} if $T + \infinity$ is isotopic (fixing the endpoints) to the tangle $\infinity$ . Note that $T^\sigma$ is simple if and only if $T$ is simple.

\begin{figure}[h]
\centering
\begin{overpic}
    [scale=.12]{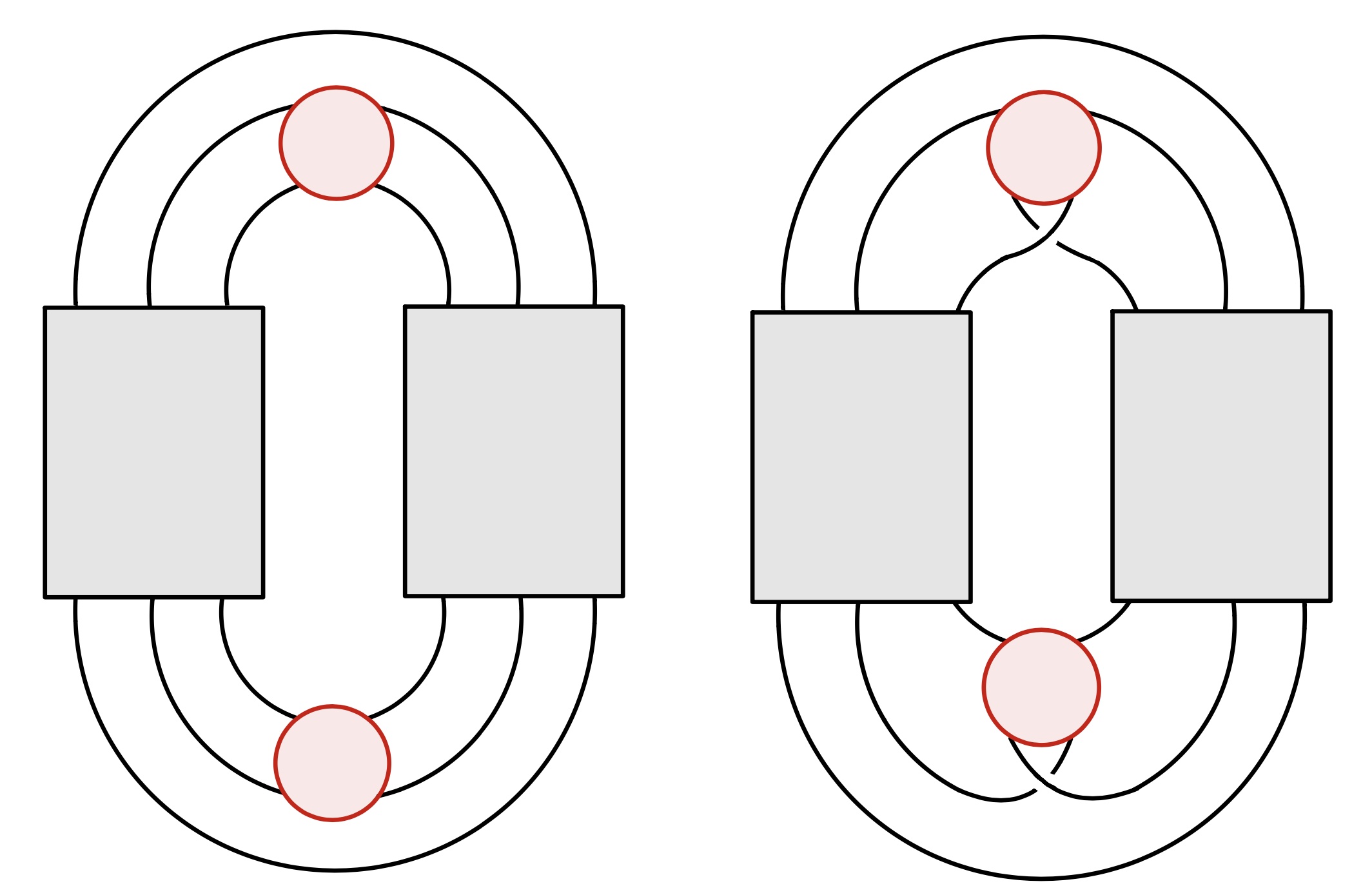}
    \put(10,31) {\rotatebox{90}{$\gamma$}}
    \put(36,33) {\rotatebox{-90}{$\gamma^{-1}$}}
    \put(62,31) {\rotatebox{90}{$\gamma$}}
    \put(88,33) {\rotatebox{-90}{$\gamma^{-1}$}}
    \put(23,53) {$T$}
    \put(23,8) {$U$}
    \put(74.5,53) {$T$}
    \put(74.5,13.5) {$U$}
    \put(17,-3) {$K_\gamma(T, U)$}
    \put(66,-3.5) {$K_\gamma(T^\sigma, U^{\overline{\sigma}})$}
\end{overpic}
\vspace{1em}
\captionsetup{width=0.9\linewidth}
\caption{The knots $K_\gamma(T,U)$ on the left and $K_\gamma(T^\sigma, U^{\overline{\sigma}})$ on the right defined in ~\cite{watson:identicalkh}. Here when $T$ and $U$ are the rational tangles $\frac{p}{q}$ for $p=1$, $K_\gamma(T,U)$ is an SU knot. In particular, if $T$ and $U$ are $(\pm 1)$-tangles, it is an SU knot with $b_s=3$.}\label{fig:TU-knots}
\end{figure}


\begin{thm}[\cite{watson:identicalkh}]\label{thm:watson}
    Suppose that $(T,U)$ is a compatible pair of simple tangles. Then 
    $$ \Kh(K_\gamma(T,U)) \cong \Kh(K_\gamma(T^\sigma, U^{\overline{\sigma}})).$$
\end{thm}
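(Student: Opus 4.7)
The plan is to use Bar-Natan's local formulation of Khovanov homology together with the skein long exact sequence, applied to the two added crossings in $T^\sigma$ and $U^{\overline{\sigma}}$. For a simple tangle $T$, resolving the added positive crossing in $T^\sigma = T + \poscrossing$ yields the two tangles $T$ (from the identity smoothing) and $T + \infinity$, joined by a saddle cobordism. By simplicity, $T + \infinity$ is isotopic (fixing endpoints) to $\infinity$, so at the level of closed diagrams one obtains the skein exact triangle
    \[
        \Kh(K_\gamma(T, U^{\overline{\sigma}})) \to \Kh(K_\gamma(T^\sigma, U^{\overline{\sigma}})) \to \Kh(K_\gamma(\infinity, U^{\overline{\sigma}})),
    \]
with grading shifts determined by the sign of the crossing. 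An analogous exact triangle, using the simplicity of $U$ on the negative crossing added in $U^{\overline{\sigma}}$, relates $\Kh(K_\gamma(T, U^{\overline{\sigma}}))$, $\Kh(K_\gamma(T, U))$ and $\Kh(K_\gamma(T, \infinity))$.

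Combining the two triangles presents the Khovanov chain complex of $K_\gamma(T^\sigma, U^{\overline{\sigma}})$ as the total complex of a bicomplex whose four corners are the Khovanov complexes of $K_\gamma(T,U)$, $K_\gamma(T,\infinity)$, $K_\gamma(\infinity, U)$ and $K_\gamma(\infinity, \infinity)$, linked by saddle maps in both the $T$- and $U$-coordinates. The crux of the proof is to show that the three-term subcomplex consisting of the corners involving $\infinity$, namely
    \[
        K_\gamma(T, \infinity) \longrightarrow K_\gamma(\infinity, \infinity) \longleftarrow K_\gamma(\infinity, U),
    \]
is acyclic, so that the bicomplex collapses onto the single remaining corner $K_\gamma(T,U)$. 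Geometrically, an $\infinity$-insertion contains a local cap-cup, and the saddle cobordisms relating these three diagrams should, after applying delooping and the local surgery relations in Bar-Natan's cobordism category, reduce to identity or null-homotopic maps, establishing the required contractibility.

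The writhe compatibility hypothesis $w(K_\gamma(T,U)) = w(K_\gamma(T^\sigma, U^{\overline{\sigma}}))$ is invoked in the final step to align the $(t,q)$-bigrading normalizations of Khovanov homology, which depend on the numbers of positive and negative crossings in the diagram. Without this hypothesis the quasi-isomorphism would introduce an overall bigrading shift. The main technical hurdle I anticipate is the acyclicity of the three-corner correction subcomplex; this is where the local structure of simple tangles is essential and where the detailed local bookkeeping of saddle cobordisms in the Bar-Natan formalism forms the substantive content of~\cite{watson:identicalkh}.
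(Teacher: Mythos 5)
First, a point of order: the paper does not prove Theorem~\ref{thm:watson} at all --- it is quoted from~\cite{watson:identicalkh} --- so there is no in-paper argument to compare yours against, and what follows measures your sketch against what a complete proof must accomplish. Your outline assembles the correct ingredients: the unoriented skein triangle (equivalently, Bar-Natan's cube over the two added crossings), the use of simplicity to replace the resolution $T + \infinity$ by $\infinity$, and the role of compatibility in aligning bigradings. But it stops exactly where the theorem starts. You write that the saddle maps ``should'' become identities or null-homotopic after delooping and that this is ``where the substantive content of~\cite{watson:identicalkh}'' lies; in other words, you have reduced the statement to its own hardest step and then deferred it. As written, this is a plan rather than a proof.

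Beyond incompleteness, the proposed mechanism has concrete problems. (i) The three $\infinity$-corners do not form a subcomplex of the shape $B \to D \leftarrow C$. Since the two added crossings have opposite signs, their $\infinity$-resolutions sit in homological degrees $+1$ and $-1$ relative to the oriented resolution, so the corners $K_\gamma(T,\infinity)$, $K_\gamma(\infinity,\infinity)$, $K_\gamma(\infinity,U)$ occupy degrees $-1,0,+1$ and the corner $K_\gamma(T,U)$ sits in the \emph{middle} of the filtration: it receives a differential from $K_\gamma(T,\infinity)$ and emits one to $K_\gamma(\infinity,U)$. Hence the complement of $K_\gamma(T,U)$ is neither a subcomplex nor a quotient complex, and ``collapsing onto the remaining corner'' cannot be achieved by quotienting out an acyclic piece; any Gaussian elimination of the $\infinity$-corners produces zig-zag correction terms in the surviving differential, and controlling those is precisely the content of the theorem. (ii) The edge maps among the $\infinity$-corners are band attachments between split links; such maps change the number of components and are generally far from isomorphisms (the merge cobordism from the two-component unlink to the unknot induces $m\colon V\otimes V\to V$, which has two-dimensional kernel), so ``reduce to identity or null-homotopic maps'' is not automatic and fails for the individual edges. (iii) Compatibility is not a final renormalisation: without it the mapping cones for the two crossings live in incompatible $(t,q)$-bidegrees and cannot be compared at all, so it must enter before any cancellation. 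For reference, the argument in~\cite{watson:identicalkh} does not totalise the square; it works one crossing at a time, exhibiting the two complexes in question as mapping cones over a common third complex, with simplicity identifying both error terms with the fixed split link $K_\gamma(\infinity,\infinity)$, and the real work is the comparison of the two band maps --- a comparison your sketch does not engage with.
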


We are now equipped for the proof of our main result.

\main*

\begin{proof}[Proof of Theorem~\ref{thm:main}]
If $b_s(K) < 3$, then $K$ is the unknot and the result follows immediately, hence suppose $b_s(K) = 3$. The statement about $K$ admitting a two-bridge partial knot $K_{p,q}$ follows from Proposition~\ref{prop:2bridge}. 


Up to mirroring, we may assume that $K$ is represented as the closure of a braid $\gamma \sigma_2\gamma^{-1}\sigma_2^{\pm 1}$ for some $\gamma \in B_3$. Observe that the corresponding braided SU diagram of $K$ is a special case of the diagram in Figure \ref{fig:TU-knots}, where the tangle $T$ is the $(-1)$-tangle and the tangle $U$ is a $\mp 1$-tangle, using the conventions in Section~\ref{sec:su}. It is easy to check that the pair $(T,U)$ is compatible and that each tangle is simple. Then, by Theorem \ref{thm:watson} we have $\Kh(K) \cong \Kh(K'_{\mp})$, where $K'_{\mp} = (D_{K_{p,q}} \sqcup -D_{K_{p,q}})(\infty_1, \mp 1 + 1)$.

Now observe that 
$K'_{\mp}$ has symmetric fusion number one since replacing the only twist region of $K'_{\mp}$ by the $\infty$-tangle allows one to cancel all crossings in $\gamma$ and $\gamma^{-1}$, which produces a two-component unlink. By Theorem~\ref{thm:kose}, we have that $K'_{\mp}$ is $K[\frac{q}{p}, \frac{1}{n},- \frac{q}{p}]$, where $n = \mp 1 + 1 $. Observe that for $K[\frac{q}{p}, \frac{1}{n},- \frac{q}{p}]$, its mirror $- K[\frac{q}{p}, \frac{1}{n},- \frac{q}{p}] = K[\frac{q}{p}, -\frac{1}{n},- \frac{q}{p}]$ for any $n$. Hence, both $K'_-$ and its mirror are isotopic to $ K[\frac{q}{p}, \frac{1}{0},- \frac{q}{p}] = K_{p,q} \# -K_{p,q}$, while $K'_+$ is isotopic to at least one of $ K[\frac{q}{p}, \pm \frac{1}{2},- \frac{q}{p}]$, which yields the result.\qedhere


\end{proof}

Theorem~\ref{thm:bs>b} now follows by combining Theorem~\ref{thm:main} and Proposition~\ref{prop:2bridge}.


\begin{proof}[Proof of Theorem~\ref{thm:bs>b}]
    Let $K = 10_{99}$ and suppose $b_s(K) = 3$. Since $\det(K) = 81$, by Proposition~\ref{prop:2bridge}, $K$ admits an element of $\mathcal{J}_9 = \{ K_{9,1}, K_{9,4} \} = \{ 9_1, 6_1 \}$ as a partial knot. The Jones polynomial of $K_{9,1} \# -K_{9,1}$ is given by
        \begin{align*}
            V(K_{9,1} \# -K_{9,1}) &= q^{-18} + {q^{-16}} -2{q^{-14}} + 3{q^{-12}} - 4{q^{-10}}\\
            &+ 5{q^{-8}} - 6{q^{-6}} + 7{q^{-4}} - 7{q^{-2}} + 9\\
            &- 7q^{2} + 7q^{4} - 6q^{6} + 5q^{8} - 4q^{10}\\
            &+ 3q^{12} - 2q^{14} + q^{16} - q^{18},
        \end{align*}
    hence by Lemma~\ref{lemma:khformula} we have $\qmax(K_{9,1} \# -K_{9,1})=19$ and $$\qmax(K[\tfrac{1}{9},\tfrac{1}{2},-\tfrac{1}{9}]) = -\qmin(K[\tfrac{1}{9},-\tfrac{1}{2},-\tfrac{1}{9}]) = 23.$$ Since $\qmax(K) = -\qmin(K) = 11$, we have that $K_{9,1}$ is not a partial knot for any braided SU diagram of $K$ on three strands. Similarly, we compute that $\qmax(K_{9,4} \# -K_{9,4}) = 13$ and $$\qmax(K[\tfrac{4}{9},\tfrac{1}{2},-\tfrac{4}{9}]) = -\qmin(K[\tfrac{4}{9},-\tfrac{1}{2},-\tfrac{4}{9}]) = 17,$$ which yields a contradiction.
    
    The proof for $K = 10_{123}$ is analogous, with $\det(K) = 121$ and $\mathcal{J}_{11} = \{ K_{11,1}, K_{11,3}, K_{11,5} \} = \{ 11_{a367}, 6_2, 7_2 \}$. The statement that $b_s(K) \in \{ 4,5 \}$ for both knots follows from the existence of index five SU braids, presented in Table~\ref{tab:bs}, whose closures yield SU diagrams for $10_{99}$ and $10_{123}$.
\end{proof}

\begin{proof}[Proof of Corollary~\ref{cor:qmax}] The result is immediate if $K$ is in family $(3)$ as the knots in family $(3)$ are amphichiral. If $K$ is in family $(1)$ or family $(2)$, by Theorem~\ref{thm:main} and Lemma~\ref{lemma:khformula}, it follows that
\begin{align*}
    \qmax(K) &= \max \{1, 2n+2m-1\} \text{ and} \\
    \qmin(K) &= \min \{-1, 2n-2m+1\}
\end{align*}
since, in the notation of Lemma~\ref{lemma:khformula}, we have $b_{m-1}=b_{-m}=(-1)^ma_m \neq 0$, where $a_m$ is the leading coefficient of $V(K_{p,r} \# -K_{p,r})$ and $n =\pm 2$ or $0$.

Let $\mathrm{br}(K)$ denote the \emph{breadth} of the Jones polynomial of $K$ defined by
     $$\mathrm{br}(K) = M(K) - m(K),$$
where $M(K)$ and $m(K)$ are the largest and the smallest powers of $q$ in $V(K)$, respectively. In general it holds that $\mathrm{br}(K) \leq 2c(K)$ for any $K$, where $c(K)$ is the crossing number of $K$, but, in particular, $\mathrm{br}(K) = 2c(K)$ if $K$ is alternating~\cite{kauffman:jones,murasugi:jones,thistlethwaite:jones}.
     
Two-bridge knots are alternating, and so are their connected sums. Thus, $m \geq 3$ since
\begin{align*}
        4m = \mathrm{br}(K_{p,r} \# -K_{p,r})  = 2 c(K_{p,r} \# -K_{p,r}) \geq 2 c( K_{3,1} \# -K_{3,1} )= 12.
    \end{align*}

Since $n =\pm 2$ or $0$ and $m \geq 3$, it follows that $\qmax(K) = 2n+2m-1$ and $\qmin(K)=2n-2m+1$, hence $\qmax(K)+\qmin(K) = 4n$. This completes the proof as $n= \pm 2$ if $K$ is in family $(1)$ and $n=0$ if $K$ is in family $(2)$. \qedhere
\end{proof}

\section{Symmetric braid indices of small ribbon knots}
\label{sec:search}

In Table~\ref{tab:bs} we present our computations of symmetric braid indices for the 50 ribbon knots with at most 11 crossings. Among these, there are six which do not have known SU diagrams; among the rest, we could not exactly determine the symmetric braid index for 10 knots, whilst for the remaining 34 examples, symmetric and regular braid indices coincide. For most of the knots in the table, we found the corresponding SU braids $\gamma C_1 \gamma^{-1} C_2$ by iterating over candidate $\gamma$ up to length 12 and index six, and all possible combinations of $C_1$ and $C_2$, using \textsc{SnapPy}~\cite{SnapPy} to identify the isotopy type of the closure, up to mirroring; the knot $11_{a326}$, however, we braided by hand using the diagram from~\cite{lamm:nonsym}.

There are eight known SU knots in Table~\ref{tab:bs} that are not obstructed from having $b_s(K) = 4$, yet for which we were unable to find the corresponding SU 4-braids via the initial approach. For these knots, we deepened our search by applying the fact that each 4-plat diagram of a rational knot $K_{p,q}$ is determined by a continued fraction expansion of $p/q$, where we allow positive and negative coefficients within the same expansion, which correspond to positive and negative crossings, respectively. This enabled us to efficiently generate more candidate $\gamma$ of index 4 for these eight knots, arising from values of $p/q$ of all knots $K_{p,q} \in \mathcal{J}_p$ for appropriate $p$. However, we were unable to find SU 4-braids for any of the knots in question using this procedure, which serves as evidence that their symmetric braid indices may be strictly greater than four.

\begin{conj}
\label{conj:bs>4}
    If $K$ is one of the SU knots $10_{99}$, $10_{123}$, $10_{140}$, $11_{a164}$, $11_{a316}$, $11_{a326}$, $11_{n42}$ and $11_{n172}$, then $b_s(K) > 4$.
\end{conj}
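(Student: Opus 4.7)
The plan is to adapt the Khovanov-homological obstruction underlying Theorem~\ref{thm:main} to the regime $b_s(K) \leq 4$. Any such knot is the closure of an SU braid $\gamma C_1 \gamma^{-1} C_2 \in B_4$ with $C_1 = \sigma_3^{\pm 1}$ and $C_2 = \sigma_1^{\pm 1} \sigma_3^{\pm 1}$, so the central portion $C_1 C_2$ contributes three twist regions (cf.\ Figure~\ref{fig:plat}). By Proposition~\ref{prop:2bridge}, any admissible partial knot lies in the finite set $\mathcal{J}_p$ with $p^2 = \det(K)$, which for each of the eight candidates is explicit and small.

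First, I would recast the diagram in a three-tangle generalisation of the $K_\gamma(T,U)$ formalism of~\cite{watson:identicalkh}, with $T_1, T_2, T_3$ the $(\pm 1)$-tangles at the three twist regions. Next, I would aim to establish a three-tangle analogue of Theorem~\ref{thm:watson}: a compatibility condition on $(T_1, T_2, T_3)$ under which simultaneously adding a half-twist to each $T_i$ preserves Khovanov homology. In the best case, the modified tangles are $0$-tangles, collapsing the axial crossings and reducing $K$ to a \emph{normal form} depending only on $\gamma$ and the partial knot $K_{p,q} \in \mathcal{J}_p$. Computing the Khovanov homology of this small family---a three-twist analogue of the symmetric fusion number one knots of Definition~\ref{def:sufusion}---and comparing it against the Khovanov polynomials of the eight candidates should then rule out $b_s(K) \leq 4$.

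The principal obstacle lies in the normal-form computation. With three twist regions, the iterated $\infty$-resolutions no longer yield a single clean link of the form $K_{p,q} \# -K_{p,q}$, so Lemma~\ref{lemma:khformula} does not apply directly. I would attack this via iterated use of the Khovanov unoriented skein exact sequence on the three distinguished crossings of $C_1 C_2$, producing a mapping cone of links assembled from $K_{p,q}$ and its mirror whose homology can in principle be tracked using Theorem~\ref{thm:kose} as a base case; controlling the cone maps tightly enough to extract sharp bounds on $\qmax$ and $\qmin$ is the technical crux. Moreover, Watson's compatibility is a writhe constraint, and enforcing it across three tangles may force a case analysis over the eight sign patterns $(\varepsilon_1, \varepsilon_2, \varepsilon_3) \in \{\pm 1\}^3$, some of which may escape the Watson reduction entirely and require a different obstruction.

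As a complementary route, one could attempt to upgrade the paper's heuristic computer search to a provably exhaustive one. For each $K_{p,q} \in \mathcal{J}_p$, the braids $\gamma \in B_4$ whose 4-plat closure realises $K_{p,q}$ are parameterised by signed continued fraction expansions of $p/q$; an a priori upper bound on the length of such an expansion---derived from an invariant of $K$ such as its crossing number, Seifert genus, or Khovanov width---would reduce the task to a finite verifiable computation. Producing an effective bound of this kind is itself a substantial obstacle, and a combination of the two strategies, possibly augmented by concordance-theoretic invariants distinguishing the eight target knots from the candidate normal forms, seems necessary to settle all eight cases simultaneously.
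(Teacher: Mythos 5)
This statement is a \emph{conjecture} in the paper, not a theorem: the authors offer only computational evidence for it (an exhaustive-by-heuristics search over candidate $\gamma \in B_4$ generated from continued fraction expansions of $p/q$ for $K_{p,q} \in \mathcal{J}_p$ that failed to produce SU 4-braids), and they explicitly flag in Section~\ref{sec:future} that the tools your plan requires do not yet exist. Your proposal is likewise not a proof but a research programme, and each of its branches has a gap you yourself name without closing. Concretely: (i) the three-tangle analogue of Theorem~\ref{thm:watson} is precisely what the paper says is missing (``there does not exist an analogue of Theorem~\ref{thm:watson} applicable to SU 4-braid closures''), and Watson's compatibility/simplicity hypotheses and proof are tailored to a \emph{pair} of tangles in a specific template, so asserting a three-tangle version is not a routine extension; (ii) even granting such a reduction, the resulting normal forms are symmetric fusion number one knots with \emph{two} residual twist regions, for which no classification or Khovanov formula analogous to Theorem~\ref{thm:kose} and Lemma~\ref{lemma:khformula} is available --- the paper states this explicitly; (iii) the unoriented skein exact sequence gives long exact sequences whose connecting maps you would need to control to extract sharp values of $\qmax$ and $\qmin$, and without that control you only get rank inequalities, which generically cannot distinguish the eight target knots from the candidate normal forms; (iv) the ``provably exhaustive search'' route founders on exactly the missing ingredient you identify, namely an effective a priori bound on the complexity of $\gamma$ realising a given partial knot as a 4-plat, and no such bound is derived here or in the cited literature.

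In short, there is no proof to compare against: the paper deliberately leaves this open, and your text is an honest but incomplete plan whose every load-bearing step is conditional on results that are not established. If you want to make progress, the most tractable first step is probably item (ii): a Khovanov computation for the two-twist-region symmetric fusion number one family (say via the reduced Bar-Natan/cobordism calculus for the tangle $C_1 \gamma^{-1} C_2 \gamma$), since that would at least give a finite list of target Khovanov polynomials to compare against the eight knots, independently of any Watson-type invariance statement.
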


\section{Some future directions}
\label{sec:future}

In order to resolve Conjecture~\ref{conj:bs>4}, one could seek a Khovanov-theoretic description of knots with $b_s(K) = 4$, particularly since they enjoy many of the properties of knots with $b_s(K) = 3$: each knot with symmetric braid index 4 must admit a partial knot from a finite set and has symmetric fusion number one, as replacing the $\tau_1$ twist region in Figure~\ref{fig:plat} by the $\infty$-tangle produces the two-component unlink. However, we do not have a classification of symmetric fusion number one knots with two twist regions, and there does not exist an analogue of Theorem~\ref{thm:watson} applicable to SU 4-braid closures. Hence, we leave the following question open.

\begin{question}
    Does there exist a characterisation of knots with $b_s(K) = 4$ similar to Theorem~\ref{thm:main}? For knots with $b_s(K) = n$ for $n \geq 5$?
\end{question}

In addition, observe that symmetric and regular braid indices agree for the two-bridge knots in Table~\ref{tab:bs}, namely $6_1$, $8_8$, $8_9$, $9_{27}$, $10_3$, $10_{22}$, $10_{35}$, $10_{42}$ and $11_{a96}$. Since ribbon two-bridge knots are SU knots by~\cite{lamm:2bridge}, one may wonder if this apparent coincidence reflects a general fact.

\begin{question}
    If $K$ is a ribbon two-bridge knot, then does $b(K) = b_s(K)$?
\end{question}

\footnotesize
\begin{table}[h]
\centering

\begin{tabular}{c||c|c||c|c|c}
$K$  & $b(K)$ & $b_s(K)$ & $\gamma$ & $C_1$ & $C_2$            \\
\hline
$6_1$  & $4$        & $4$           & $2\underline{1}2$  & $3$ & $1\underline{3}$ \\
$8_8$  & $4$        & $4$           & $2\underline{1} 2 2$ & $3$ & $1\underline{3}$ \\
$8_9$  & $3$        & $3$           & $1 1\underline{2} 1$ & $2$ & $\underline{2}$ \\
$8_{20}$ & $3$        & $3$           & $1 1 1$ & $2$ & $2$ \\
$9_{27}$ & $4$        & $4$           & $2\underline{1} \underline{3} 2 2$ & $3$ & $1 \underline{3}$     \\
$9_{41}$ & $5$        & $5$           & $\underline{3} 2 1 2 4 3 3$ & $2 4$ & $2 \underline{4}$ \\
$9_{46}$ & $4$        & $4$           & $2\underline{1} 2$ & $3$ & $1 3$        \\
$10_3$ & $6$ & $6$ & $2 4\underline{1} 3 3 5 2 4$  & $3 5$ & $13\underline{5}$ \\
$10_{22}$ & $4$ & $4$ & $2\underline{1}222$ & $3$ & $1\underline{3}$ \\
$10_{35}$ & $6$ & $6$ & $2\underline{1} \underline{4} 3 2 \underline{4}$ & $35$ & $\underline{1}3\underline{5}$ \\
$10_{42}$ & $5$ & $5$ & $3 3 \underline{4}\underline{2} 1\underline{2} 3$ & $24$ & $\underline{2}\underline{4}$ \\
$10_{48}$ & $3$ & $3$ & $111\underline{2}1$ & $2$ & $\underline{2}$ \\
$10_{75}$ & $5$ & $[5,6]$ & $2\underline{1} \underline{4} 3 2 5 \underline{4}$ & $35$ & $\underline{1}3\underline{5}$ \\
$10_{87}$ & $4$ & $4$ & $2\underline{1}\underline{1} \underline{3} 2 2$ & $3$ & $1\underline{3}$ \\
\rowcolor{lightgray!50}$10_{99}$ & $3$ & $[4,5]$ & $1 1 1 3 \underline{4} 3$ & $24$ & $\underline{2}\underline{4}$ \\
\rowcolor{lightgray!50}$10_{123}$ & $3$ & $[4,5]$ & $1 3\underline{2} 1 \underline{4} 3$ & $24$ & $\underline{2}\underline{4}$ \\
$10_{129}$ & $4$ & $4$ & $2\underline{1}22$ & $3$ & $\underline{1}3$ \\
$10_{137}$ & $5$ & $5$ & $1 2 3\underline{2} 1 3$ & $24$ & $2\underline{4}$ \\
$10_{140}$ & $4$ & $[4,5]$ & $1113$ & $24$ & $24$ \\
$10_{153}$ & $4$ & $4$ & $21112$ & $3$ & $1\underline{3}$ \\
$10_{155}$ & $3$ & $3$ & $11\underline{2}1$ & $2$ & $2$ \\
$11_{a28}$ & $4$ & $4$ & $2\underline{1} \underline{3} 2 \underline{3} 2$ & $3$ & $1\underline{3}$ \\
$11_{a35}$ & $4$ & $4$ & $2\underline{3311}22$ & $3$ & $1\underline{3}$ \\
$11_{a36}$ & $5$ & $5$ & $1234 2 4 3 3 4\underline{1} 4\underline{1} \underline{3} $ & $2\underline{4}$ & $\underline{2}4$ \\
$11_{a58}$ & $5$ & $5$ & $3\underline{142114}23\underline{1}424321 $ & $2\underline{4}$ & $\underline{2}4$ \\
$11_{a87}$ & $5$ & $5$ & $1234\underline{2} \underline{4} 1 3\underline{2} 4 3 3$ & $2 \underline{4}$ & $2 \underline{4}$ \\
$11_{a96}$ & $6$ & $6$ & $2\underline{1} 4 \underline{3} 2 4$ & $35$ & $1\underline{3}\underline{5}$ \\
\rowcolor{red!20}$11_{a103}$ & $6$ & $[6, \infty]$ & --- & --- & --- \\
$11_{a115}$ & $5$ & $5$ & $1234\underline{2}\underline{2} \underline{4} 1 3 2 \underline{3}$ & $2 \underline{4}$ & $\underline{2} \underline{4}$ \\
$11_{a164}$ & $4$ & $[4,5]$ & $1234\underline{2} 4 4 3 3 4 1 2 1 \underline{3}$ & $2\underline{4}$ & $\underline{2}4$ \\
\rowcolor{red!20}$11_{a165}$ & $5$ & $[5, \infty]$ & --- & --- & --- \\
$11_{a169}$ & $5$ & $5$ & $1234\underline{2}\underline{2}\underline{4}\underline{3}21413$ & $2 \underline{4}$ & $\underline{2} 4$ \\
\rowcolor{red!20}$11_{a201}$ & $6$ & $[6, \infty]$ & --- & --- & --- \\
$11_{a316}$ & $4$ & $[4,5]$ & $1234 2 4 4\underline{1} 2 4 \underline{3} 2$ & $2\underline{4}$ & $\underline{2}4$ \\
$11_{a326}$ & $4$ & $[4,7]$ & $5 3 1 6 \underline{4} 2 5 \underline{3}\underline{1} 5 5 3 3 \underline{1}$ & $2\underline{4}\underline{6}$ & $\underline{2}\underline{4}6$ \\
$11_{n4}$ & $4$ & $4$ & $2\underline{1} \underline{3} 2 2$ & $3$ & $\underline{1}3$ \\
$11_{n21}$ & $4$ & $4$ & $2\underline{1}\underline{1} 2 2$ & $3$ & $1\underline{3}$ \\
$11_{n37}$ & $4$ & $4$ & $2\underline{1} \underline{1}\underline{1} 2$ & $3$ & $1\underline{3}$ \\
$11_{n39}$ & $4$ & $4$ & $2\underline{1}\underline{1} \underline{3} 2$ & $3$ & $\underline{1}\underline{3}$ \\
$11_{n42}$ & $4$ & $[4,5]$ & $1 1\underline{2} 3 2 \underline{1}$ & $24$ & $\underline{2}4$ \\
$11_{n49}$ & $5$ & $5$ & $1 1 3\underline{2}\underline{1} 3$ & $24$ & $\underline{2}\underline{4}$ \\
$11_{n50}$ & $4$ & $4$ & $2 2\underline{1} 2$ & $3$ & $13$ \\
\rowcolor{red!20}$11_{n67}$ & $5$ & $[5, \infty]$ & --- & --- & --- \\
\rowcolor{red!20}$11_{n73}$ & $4$ & $[4, \infty]$ & --- & --- & --- \\
\rowcolor{red!20}$11_{n74}$ & $4$ & $[4, \infty]$ & --- & --- & --- \\
$11_{n83}$ & $5$ & $5$ & $3144\underline{3}2\underline{3}1$ & $24$ & $\underline{2}\underline{4}$ \\
$11_{n116}$ & $5$ & $5$ & $1 1 3\underline{2}\underline{1} 3$ & $24$ & $2\underline{4}$ \\
$11_{n132}$ & $4$ & $4$ & $2\underline{1} 2 2$ & $3$ & $13$ \\
$11_{n139}$ & $5$ & $[5,6]$ & $2\underline{1}24$ & $35$ & $135$ \\
$11_{n172}$ & $4$ & $[4,6]$ & $\underline{2} 4 1 1 \underline{3} 5\underline{2} 4$ & $35$ & $\underline{1}35$ \\

\end{tabular}
\vspace{0.5\baselineskip}
\captionsetup{width=\linewidth}
\caption{Bounds for $b_s(K)$ for ribbon knots with at most $11$ crossings, up to taking the mirror image. Corresponding SU braids are presented in the form~\eqref{eq:beta}, with $i$ and $\underline{i}$ denoting $\sigma_i$ and $\sigma_i^{-1}$, respectively. Knots that do not possess known SU diagrams are highlighted in red, and knots with distinct symmetric and regular braid indices in grey.}
\label{tab:bs}
\end{table}

\printbibliography
\end{document}